\newcommand{\R}{\mathbb{R}}
\newcommand{\C}{\mathbb{C}}
\newcommand{\N}{\mathbb{N}}
\newcommand{\dd}{\mathrm{d}}
\renewcommand{\S}{\mathbb{S}}
\DeclareMathOperator{\diag}{diag}
\DeclareMathOperator{\tr}{tr}
\DeclareMathOperator{\dist}{dist}
\newtheoremstyle{indented}
{7pt} 
{7pt} 
{} 
{1.5em} 
{\bfseries} 
{.} 
{.5em} 
{} 
\theoremstyle{definition}
\newtheorem{defn}{Definition}[section]
\theoremstyle{plain}
\newtheorem*{theorem*}{Theorem}
\newtheorem{theorem}{Theorem}
\newtheorem{prop}[defn]{Proposition}
\newtheorem{lem}[defn]{Lemma}
\theoremstyle{definition}
\newtheorem{rem}[defn]{Remark} 
\renewcommand*\env@matrix[1][*\c@MaxMatrixCols c]{%
  \hskip -\arraycolsep
  \let\@ifnextchar\new@ifnextchar
  \array{#1}}
\title{The Bergman kernel in constant curvature}
\author{Alix Deleporte\thanks{deleporte@math.unistra.fr}}
\affil{Universit\'e de Strasbourg, CNRS, IRMA UMR 7501, F-67000 Strasbourg, France}
\newcommand{\fixitemthm}{\leavevmode\vspace{-0.5em}} 
\newcommand\blfootnote[1]{%
  \begingroup
  \renewcommand\thefootnote{}\footnote{#1}%
  \addtocounter{footnote}{-1}%
  \endgroup
}
\begin{document}

\maketitle

\blfootnote{This work was supported by grant
  ANR-13-BS01-0007-01\\
  MSC 2010 Subject classification: 32A25 32W50 35A20 35P10 35Q40 58J40
  58J50 81Q20}
\begin{abstract}
We present an elementary proof for an approximate
expression of the Bergman kernel on homogeneous spaces, and products
of them. The error term is exponentially small with respect to the
inverse semiclassical parameter. 
\end{abstract}

\section{Introduction}
\label{sec:introduction}

\subsection{Bergman kernels}
\label{sec:toeplitz-operators}
This article is devoted to the study of the Bergman kernel on
homogeneous spaces, that is, K\"ahler manifolds with constant
curvature (see Definition \ref{def:homo}). This class of manifolds
contain complex projective spaces (on which the Bergman kernel is
explicit), as well as tori and hyperbolic manifolds (on which it is
not). This kernel encodes the
holomorphic sections of a
suitable line bundle over $M$.

The study of the Bergman kernel is mainly
motivated by \emph{Berezin-Toeplitz quantization}, which associates to
a function $f$ on $M$ a sequence $(T_N(f))_{N\in \N}$ of linear
operators on holomorphic sections over $M$. Toeplitz operators allow to tackle
problems arising from representation theory \cite{charles_knot_2015}, semiclassical analysis
\cite{sjostrand_density_1996} and quantum spin systems \cite{deleporte_low-energy_2017}. The Bergman
kernel is also associated with determinantal processes \cite{berman_determinantal_2008}, 
sampling theory
\cite{berman_fekete_2011},
and nodal sets
\cite{zelditch_large_2013}.

\begin{defn}\label{def:toep}\fixitemthm
  \begin{itemize}
    \item A K\"ahler manifold $(M,J,\omega)$ is
      \emph{quantizable} when there exists a Hermitian line bundle $(L,h)$ over $M$ with curvature $-2i\pi
\omega$. The bundle $(L,h)$ is then called prequantum line bundle over $M$.
\item Let $(M,J,\omega)$ be a quantizable K\"ahler manifold
  with $(L,h)$ a prequantum bundle and let
  $N\in \N$.
  \begin{itemize}
    \item The Hardy space $H_0(M,L^{\otimes N})$ is the
space of holomorphic sections of $L^{\otimes N}$. It is a closed subspace of
$L^2(M,L^{\otimes N})$ which consists of all square-integrable
sections of the same line bundle.

  \item The Bergman projector $S_N$ is the orthogonal projector from
  $L^2(M,L^{\otimes N})$ to $H_0(M,L^{\otimes N})$.
\end{itemize}
\end{itemize}
\end{defn}
The simplest example of a quantizable compact K\"ahler manifold is the
one-dimensional projective space $\C\mathbb{P}^1$, endowed with the
natural complex structure $J_{st}$ and the Fubini-Study form
$\omega_{FS}$. A natural bundle over $\C\mathbb{P}^1$ is the
tautological bundle (the fibre over one point is the corresponding
complex line in $\C^2$). Then $L$ is the \emph{dual} of the tautological
bundle. One can show that $H_0(\C\mathbb{P}^1,L^{\otimes N})$ is
isomorphic to the space of
homogeneous polynomials of degree $N$ in two variables (with scalar
product given by the volume form on $\S^3$).

The space $H_0(M,L^{\otimes N})$ is always finite-dimensional if $M$
is compact. Indeed,
since $\Delta=-\partial\overline{\partial}$, one has
\[
  H_0(M,L^{\otimes N})=\ker
  \overline{\partial}_{C^{\infty}(M,L^{\otimes N})\to
    \Omega_1(M,L^{\otimes N})}\subset \ker \Delta_{C^{\infty}(M,L^{\otimes N})\to
    \Omega_1(M,L^{\otimes N})}.
\]
The Laplace operator $\Delta$ is elliptic on the compact manifold $M$, so that its kernel is finite-dimensional.

\subsection{Kernels of linear operators between sections of line
  bundles}
\label{sec:kern-line-oper}
The Bergman projector $S_N$ is a linear operator mapping the space of
sections $H_0(M,L^{\otimes N})$, to itself. Here we
describe what it means for such an operator to have an integral
kernel, and the nature of this kernel.

If $E$ and $F$ are finite-dimensional vector spaces, then it is well
known that the space $L(F,E)$ of linear opeators from $E$ to $F$ can
be identified with $F\otimes E^*$ where $E^*$ is the dual of
$E$. Using this, let us construct, for any two line bundles
$E_1\stackrel{\pi_1}{\to}M_1$ and $E_2\stackrel{\pi_2}{\to}M_2$ over
Riemannian manifolds, a
space of kernels $E_1\boxtimes E_2^*$ for linear operators which associate, to a section of
$E_2$, a section of $E_1$.

The space $E_1\boxtimes E_2^*$ will be constructed as a vector bundle
over $M_1\times M_2$. An informal definition is that the fiber
$(E_1\boxtimes E_2^*)_{(x,y)}$ over a point $(x,y)\in M_1\times M_2$ is
defined as the tensor product $(E_1)_x\otimes (E_2)_y^*$.

One can formally build $E_1\boxtimes E_2^*$ in two steps. The first
step is to associate to $E_1\stackrel{\pi_1}{\to}M_1$ a bundle $E_1'\stackrel{\pi_1'}{\to}M_1\times M_2$ as
follows:
\begin{align*}
  E_1'&=E_1\times M_2\\
  \pi_1'(e,y)&=(\pi_1(e),y).
\end{align*}
Then $(E_1')_{(x,y)}=(\pi_1')^{-1}((x,y))=\pi_1^{-1}(x)\times \{y\}\simeq (E_1)_x$. Similarly, from the dual bundle
$E_2^*$ of $E_2$, one can build
$E_2^{*'}\stackrel{\pi_2'}{\to}M_1\times M_2$. Then, the second step
is to define
\[
  E_1\boxtimes E_2^*=E_1'\otimes E_2^{*'}.
\]
Then the fibre over one point reads
\[
  (E_1\boxtimes E_2^*)_{(x,y)}\simeq(E_1')_{(x,y)}\otimes
  (E_2^{*'})_{(x,y)}\simeq(E_1)_x\otimes (E_2)_y^*,
\]
as prescribed.

A smooth section of $E_1\boxtimes E_2^*$ gives a linear operator between
compactly supported, smooth sections of $E_2$ and sections of
$E_1$. Indeed, if $K_A$ is a smooth section
of $E_1\boxtimes E_2^*$, then for any compactly supported, smooth section $s$ of $E_2$ one can
define the section $As$ of $E_1$ as
\[
  (As)(x)=\int_{M_2}K_A(x,y)s(y)\dd Vol(y).
\]
Indeed, $K_A(x,y)\in(E_1)_x\otimes (E_2)_y^*$ is a linear operator
from $(E_2)_y$ (to which $s(y)$ belongs) and $(E_1)_x$. Then the
integral makes sense as taking values in $(E_1)_x$, so that $As$ is
well-defined as a section of $E_1$. 

In particular, in our setting the Bergman projector $S_N$ admits a kernel as an element of
$L^{\otimes N}\boxtimes \overline{L}^{\otimes N}$. Indeed, since $H_0(M,L^{\otimes N})$ is finite-dimensional, it
is spanned by a Hilbert base $s_1,\ldots,s_{d_N}$ of holomorphic
sections of $L^{\otimes N}$. Then the kernel of $S_N$ is
\[
  S_N(x,y)=\sum_{i=1}^{d_N}s_i(x)\otimes \overline{s_i(y)}.
\]

\subsection{Statement of the main results}
\label{sec:stat-main-results}

  \begin{defn}
\label{def:homo}
  A K\"ahler manifold $(M,\omega, J)$ is called \emph{homogeneous} under the two following conditions:
  \begin{itemize}
  \item For every two points $x,y\in M$, there exist an open set $U\in
    M$ containing $x$, an open set $V\in M$ containing $y$, and a
    biholomorphism $\rho:U\mapsto V$ which preserves $\omega$.
  \item For every point $x\in M$, there exist an open set $U\in M$
    containing $x$ and an action of $U(d)$ by $\omega$-preserving
    biholomorphisms on $U$, with $x$ as only common fixed point, such that the
    induced linear action on $T_xM$ is conjugated to the tautological
    action of $U(d)$ on $\C^d$.
  \end{itemize}
\end{defn}
There is a one-parameter family of local models for homogeneous
manifolds of dimension $d$: for positive curvature $c>0$, the rescaled
complex projective space $\C\mathbb{P}^d$; for zero curvature $c=0$,
the vector space $\C^d$; for negative curvature $c>0$, the rescaled
hyperbolic space $\mathbb{H}^{2d}$. In particular, on a homogeneous
K\"ahler manifold $(M,\omega,J)$, in the real-analytic structure given by $(M,J)$,
the symplectic form $\omega$ is real-analytic. 

Using the standard notion of holomorphic extensions of
real-analytic functions on totally real submanifolds, let us define
what will be the kernel of the Bergman projector, up to a constant
multiplicative factor and an exponentially small error.

\begin{defn}[A particular section of $L^{\otimes N}\boxtimes
  \overline{L}^{\otimes N}$]
  \label{def:natural-section}
The bundle $L\boxtimes \overline{L}$, when restricted to the diagonal
$M_{\Delta}=\{(x,y)\in M\times M,x=y\}$,
is the trivial line bundle $M\times \C\to M$. Moreover, if the first
component of $M\times M$ is endowed with the complex structure on $M$,
and the second component with the opposite complex structure (we
informally call $M\times \overline{M}$ this complex manifold), then
$M_{\Delta}$ is a totally real submanifold of $M\times \overline{M}$.

Over a small neighbourhood of $M_{\Delta}$ in $M\times M$, one can
then uniquely define $\Psi^1$ as the unique holomorphic section of
$L\boxtimes \overline{L}$ which is equal to $1$ on $M_{\Delta}$.

This section is locally described at follows: let $s$ be a
non-vanishing holomorphic section of $L$ over a small open set
$U\subset M$. Let $\phi=-\frac 12 \log|s|_h$. Then $\phi$ is
real-analytic, so that it admits a holomorphic extension
$\widetilde{\phi}$, defined on $U\times \overline{U}$ (again, the
diagonal copy of $U$ is totally real in $U\times \overline{U}$). Then
\[
  \Psi^1(x,y)=e^{2\widetilde{\phi}(x,y)}s(x)\otimes \overline{s(y)}.
\]

We then define $\Psi^N$ as $(\Psi^1)^{\otimes N}$, which is a section of $L^{\otimes N}\boxtimes
\overline{L}^{\otimes N}$.
  \end{defn}

\begin{theorem}
\label{thr:Szeg-homo}
  Let $M$ be a quantizable K\"ahler manifold of complex dimension $d$ and
  suppose $M$ is a product of compact homogeneous K\"ahler manifolds.
  
  Then the Bergman projector $S_N$ on $M$
  has an approximate kernel: there is a sequence of real coefficients
  $(a_i)_{0\leq i \leq d}$, and positive constants $c,C$ such
  that, for all $(x,y)\in M\times M$ and for all $N\geq 1$, one has
    \[
    \left\|S_N(x,y)-\Psi^N(x,y)\sum_{k=0}^{d}N^{d-k}a_k\right\|_{h}\leq Ce^{-cN}.
  \]
  If $M$ is homogeneous, with curvature $\kappa$, then
  \[
    \sum_{k=0}^{d}N^{d-k}a_k=\frac{1}{\pi^d}(N-\kappa)(N-2\kappa)\ldots(N-d\kappa).
    \]
  \end{theorem}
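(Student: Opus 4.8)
The plan is to exploit the product structure first and then reduce everything to a rigidity phenomenon forced by homogeneity on a single factor. Since $H_0(M_1\times M_2,(L_1\boxtimes L_2)^{\otimes N})\simeq H_0(M_1,L_1^{\otimes N})\otimes H_0(M_2,L_2^{\otimes N})$ by the Künneth formula and the $L^2$ inner product splits as a product, the Bergman projector of a product is the external tensor product $S_N^{M_1\times M_2}=S_N^{M_1}\boxtimes S_N^{M_2}$; moreover $\Psi^N$ factors in the same way because the local Kähler potential $\phi$ is additive under products. Consequently, if the statement holds on each factor with polynomial $P_j(N)$ of degree $d_j$, it holds on the product with polynomial $\prod_j P_j(N)$ of degree $\sum_j d_j=d$, the error terms combining because each factor kernel is uniformly bounded (a by-product of the single-factor estimate). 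This reduces the theorem to a single compact homogeneous $M$ of complex dimension $d$.

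The elementary heart of the argument is a near-diagonal rigidity statement. Fixing a non-vanishing holomorphic frame $s$ of $L$ over a small set $U$ and writing $S_N=(\sum_i f_i(x)\overline{f_i(y)})\,s(x)^{\otimes N}\otimes\overline{s(y)^{\otimes N}}$ with $f_i$ holomorphic, I observe that on a fixed tubular neighbourhood $\Omega$ of $M_\Delta$ (of size uniform in $N$, since homogeneity makes the analytic radius of $\phi$ uniform) the quotient $u_N:=S_N/\Psi^N$ is well defined, holomorphic in $x$ and anti-holomorphic in $y$, i.e.\ holomorphic on the complexification $M\times\overline{M}$ near the totally real submanifold $M_\Delta$. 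On the diagonal $u_N(x,x)$ equals the density of states $\rho_N(x)=S_N(x,x)$. By homogeneity $\rho_N$ is constant — exactly for $\CP^d$ and for flat quotients, where holomorphic isometries act transitively, and up to $O(e^{-cN})$ for hyperbolic quotients, where one instead uses the off-diagonal decay of $S_N$ together with the locality of the model. Writing $b_N$ for this (near-)constant value, the quantitative identity theorem (Cauchy estimates on $M\times\overline{M}$, using that $u_N|_{M_\Delta}=b_N+O(e^{-cN})$ in a uniform analytic norm) propagates the diagonal information to $u_N=b_N+O(e^{-c'N})$ on a slightly smaller $\Omega'\Subset\Omega$. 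Since $\|\Psi^N\|_h\le1$, this yields $\|S_N-b_N\Psi^N\|_h\le Ce^{-c'N}$ throughout $\Omega'$.

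It remains to control the complement of $\Omega'$ and to identify $b_N$. On $M\times M\setminus\Omega'$ one has $\dist(x,y)\ge\epsilon$, so $\|\Psi^N\|_h=e^{-ND(x,y)}\le e^{-c''N}$ (with $D$ the Calabi diastasis, strictly positive off the diagonal), while $\|S_N(x,y)\|_h\le Ce^{-cN}$ by the exponential off-diagonal decay of the Bergman kernel; both being exponentially small, the bound holds there too. I expect this off-diagonal decay, together with the upgrade from polynomial to exponential precision in the propagation step, to be the main obstacle: it is precisely where the real-analyticity of the homogeneous metric must be used, for instance through analytic a priori (Agmon-type) estimates, or through an analytic stationary-phase analysis of the idempotency relation $S_N=\int_M S_N(\cdot,z)S_N(z,\cdot)\,\dd Vol(z)$, which alone forces $b_N$ to absorb the full polynomial asymptotics and leaves only an exponentially small remainder.

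Finally, $b_N$ is computed by integrating the diagonal identity: $b_N\,\mathrm{Vol}(M)=\int_M\rho_N\,\dd Vol=\dim H_0(M,L^{\otimes N})$, which for large $N$ equals the Hilbert polynomial $\chi(L^{\otimes N})$ by Kodaira vanishing and is therefore exactly a degree-$d$ polynomial in $N$; the finitely many small-$N$ discrepancies $\dim H_0-\chi$ are bounded and absorbed into $C$. Evaluating $\dim H_0$ and $\mathrm{Vol}(M)$ on the three local models gives the closed form: for $\CP^d$ one has $\dim H_0=\binom{N+d}{d}$ and $\mathrm{Vol}=\pi^d/d!$, yielding $\frac1{\pi^d}(N-\kappa)(N-2\kappa)\cdots(N-d\kappa)$ with the appropriate sign of $\kappa$; the flat case $\kappa=0$ gives $N^d/\pi^d$; and the hyperbolic case gives the same product with the opposite sign of $\kappa$. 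Since all three share the factored form $\frac1{\pi^d}\prod_{j=1}^d(N-j\kappa)$, this establishes the stated polynomial, the coefficients $a_k$ being read off from it, and completes the proof.
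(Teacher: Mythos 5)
Your reduction to a single homogeneous factor and your final computation of the constant via $b_N\,\mathrm{Vol}(M)=\dim H_0(M,L^{\otimes N})$ both match the paper (Sections 5 and 6). But the central step of your argument has a genuine gap, and it is precisely the pitfall the paper flags at the start of Section 5. You define $u_N=S_N/\Psi^N$ near the diagonal, note that $u_N|_{M_\Delta}=b_N+O(e^{-cN})$, and invoke ``the quantitative identity theorem (Cauchy estimates)'' to conclude $u_N=b_N+O(e^{-c'N})$ on a complex neighbourhood. This inference is false: $M_\Delta$ is a \emph{totally real} submanifold of $M\times\overline{M}$, not an open subset, so an exponentially small bound on $M_\Delta$ gives no control whatsoever on the holomorphic extension. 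A function such as $z\mapsto e^{-N}\sin(e^{2N}z)$ is bounded by $e^{-N}$ on the reals yet astronomically large at distance $O(1)$ from them; the error $S_N(x,x)-b_N$ could in principle oscillate at such a rate, and nothing in your argument rules this out. Cauchy estimates require a sup bound on an open set in the complexification, which is exactly what you do not have. You half-acknowledge the problem (``I expect this\dots to be the main obstacle''), but the idempotency/stationary-phase remark is a gesture, not an argument. The paper's way around this is structural: it never tries to propagate diagonal information outward. Instead it builds approximate coherent states $\widetilde{\psi}_{P_0,v}^N$ in radial holomorphic charts, shows via Kohn's $\overline\partial$-estimate that they are exponentially close to their holomorphic projections, proves the exact identity $S_N\widetilde{\psi}_{P_0,v}^N=\psi_{P_0,v}^N$, and then computes the off-diagonal kernel $\langle\widetilde\psi_x^N,\widetilde\psi_y^N\rangle$ \emph{explicitly} through the change-of-chart formula for the potential, which is where the identification with $\Psi^N$ actually happens.

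Two secondary issues. First, your claim that the density of states $\rho_N(x)=S_N(x,x)$ is \emph{exactly} constant for flat quotients is wrong: on a torus with $N=1$ the density is $|\theta|^2e^{-\phi}$, which vanishes at the zero of the theta function; only local, not global, holomorphic isometries are available on quotients, so near-constancy of $\rho_N$ is itself something to prove (in the paper it falls out of the coherent-state construction). Second, you invoke the uniform exponential off-diagonal decay $\|S_N(x,y)\|_h\leq Ce^{-cN}$ for $\dist(x,y)\geq\epsilon$ as a known input, but this decay is part of what the theorem package establishes (Proposition \ref{prop:reproducing}, second item); assuming it makes the argument circular unless you supply an independent proof.
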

 A proof of Theorem \ref{thr:Szeg-homo} using advanced microlocal
 analysis (local Bergman kernels) was first hinted in
\cite{berman_sharp_2012} and detailed in
\cite{hezari_off-diagonal_2017}, where the coefficients $a_k$ are
explicitely computed through an explicit expression of the K\"ahler
potential $\phi$ in a chart. We propose to prove Theorem
\ref{thr:Szeg-homo} without semiclassical tools, and to recover the
coefficients $a_k$ from an elementary observation of the case of
positive curvature.
  
Theorem \ref{thr:Szeg-homo} implies exponential approximation in the
$L^2$ operator sense. Indeed, if $K$ is a section of $L^{\otimes N}\boxtimes
\overline{L}^{\otimes N}$ with $\|K(x,y)\|_h\leq C$ for all $(x,y)\in
M^2$, then for $u\in L^2(M,L^{\otimes N})$ one has
\begin{align*}
  \int_M\left\|\int_M\langle K(x,y),u(y)\rangle_h\dd y\right\|_h^2\dd
  x&\leq \int_M\int_M\|\langle K(x,y),u(y)\rangle_h\|_h^2\dd y \dd x\\
   &\leq \int_M\int_M \|K(x,y)\|_h^2\|u(y)\|_h^2\dd x \dd y\\
  &\leq C^2Vol(M)\|u\|^2_{L^2}.
\end{align*}

Expressions for the Bergman kernel such as the one appearing in
Theorem \ref{thr:Szeg-homo} were first obtained by Charles
\cite{charles_berezin-toeplitz_2003} in the smooth setting; in this weaker
case the section $\Psi^N$ is only defined at every order on the
diagonal, which yields an $O(N^{-\infty})$ remainder.

Our proof of Theorem \ref{thr:Szeg-homo},
does not rely on microlocal analysis; the only partial
differential operator involved is the Cauchy-Riemann operator
$\overline{\partial}$ acting on $L^2(M,L^{\otimes N})$. We use the
following estimate on this operator: if $M$ is compact, there exists $C>0$ such that, for every
$N\geq 1$ and $u\in L^2(M,L^{\otimes N})$, one has:
\begin{equation}
  \label{eq:Kohn}
  \|\overline{\partial}u\|_{L^2}\geq C\|u-S_Nu\|_{L^2}.
\end{equation}
This estimate follows from the work of Kohn \cite{kohn_harmonic_1963,kohn_harmonic_1964}, which relies only
on the basic theory of unbounded operators on Hilbert spaces; it is
widely used in the asymptotic study of the Bergman kernel, where it is
sometimes named after H\"ormander or Kodaira.



The rest of this article is devoted to the proof of Theorem
\ref{thr:Szeg-homo}. The plan is the following: we build an approximation
$\widetilde{S}_N$, up to exponential precision, for the Bergman
kernel on compact homogeneous spaces. The method consists in constructing
candidates $\widetilde{\psi}_{x,v}^N$ for the coherent states, using
the local symmetries. These states are almost
holomorphic and satisfy a reproducing condition; from these
properties, we deduce that the associated reproducing kernel is
exponentially close to the Bergman kernel.

\begin{rem}[Non-compact homogeneous spaces]
  Since Kohn's estimate \eqref{eq:Kohn} is valid for general
  homogeneous manifolds, the method of approximation of the Bergman
  kernel which we provide in this paper adapts to non-compact
  homogeneous spaces under the condition that the radius of
  injectivity is bounded from below. In the simple picture of
  hyperbolic surfaces of finite genus, we allow for the presence of funnels but not
  cusps (more specifically, the behaviour of the Bergman kernel far
  away in a cusp, where the diameter is smaller than $N^{-\frac
    12}$, is unknown to us). The exact statement of Theorem
  \ref{thr:Szeg-homo} is valid in this context, however we cannot
  conclude that $S_N$ is controlled in the $L^2$ operator norm.
\end{rem}

\section{Radial holomorphic charts}

K\"ahler potentials on a K\"ahler manifold $(M,J,\omega)$ are characterised by the
following property. If $\rho$ is a local
holomorphic chart for $M$, the pulled-back symplectic form $\rho^*\omega$ can
be seen as a function of $\C^d$ into anti-Hermitian matrices of size
$2d$. The closure condition $d\omega=0$ is then equivalent to the existence of a
real-valued function $\phi$ on the chart such that $i\partial
\overline{\partial}\phi=\rho^*\omega$. Such a $\phi$ is a
K\"ahler potential.

From now on, $(M,J,\omega)$
denotes a compact quantizatble homogeneous K\"ahler manifold, of
complex dimension $d$, and
$(L,h)$ is the prequantum bundle over $M$.

Near every
point $P_0\in M$, we will build a \emph{radial holomorphic chart}
using the local homogeneity. This chart is the main ingredient in the
construction of the approximate coherent states.

\begin{prop}
  \label{prop:rad-hol-chart}
  For every $P_0\in M$, there is an open set $U\subset M$ with
  $P_0\in U$, an open set $V\subset \C^d$ invariant under $U(d)$, and a biholomorphism
  $\rho:V\mapsto U$, such that $\rho^* \omega$ is invariant under
  $U(n)$.

  In particular, in this chart, there exists a K\"ahler potential
  $\phi$ which depends only on the distance to the origin, with
  real-analytic regularity.
\end{prop}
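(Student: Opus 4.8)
The plan is to construct the chart directly from the second homogeneity axiom in Definition \ref{def:homo}, using the local $U(d)$-action as the engine that both produces the radial coordinate and forces the potential to be radial. Let me think about how the pieces fit together.

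The key structural fact I want to exploit is the second bullet of Definition \ref{def:homo}: near $P_0$ there is a local action of $U(d)$ by $\omega$-preserving biholomorphisms fixing $P_0$, whose linearization on $T_{P_0}M$ is conjugate to the tautological action of $U(d)$ on $\mathbb{C}^d$. The main obstacle is that this action is given only abstractly and only near $P_0$; I need to turn it into an honest invariant holomorphic chart. The natural tool is Bochner's linearization theorem (or an explicit averaging argument): since $U(d)$ is compact and fixes $P_0$, a holomorphic action near a fixed point can be linearized by a holomorphic change of coordinates. Concretely, I would take any holomorphic chart $\psi$ centered at $P_0$, then average it over the group using the normalized Haar measure $\dd\mu$ to produce a map intertwining the given action with its linearization; after composing with the intertwiner that conjugates this linearization to the standard action (which exists by the last clause of the axiom), I obtain a biholomorphism $\rho: V \to U$ from a $U(d)$-invariant open set $V \subset \mathbb{C}^d$ such that $\rho$ intertwines the tautological $U(d)$-action on $V$ with the given $\omega$-preserving action on $U$.

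Let me write this out. Fix a holomorphic chart $\psi_0 : U_0 \to \mathbb{C}^d$ with $\psi_0(P_0) = 0$ and $\dd(\psi_0)_{P_0}$ equal to the conjugating isomorphism from the axiom, so that the linearized action is the standard one. Denoting by $g \cdot x$ the given action of $g \in U(d)$ on $U_0$, I set
\[
  \psi(x) = \int_{U(d)} g^{-1} \cdot \psi_0(g \cdot x) \, \dd\mu(g),
\]
where $g^{-1} \cdot$ on the left denotes the standard linear action on $\mathbb{C}^d$. Since each integrand is holomorphic in $x$ and the integration is over a compact group, $\psi$ is holomorphic; its differential at $P_0$ is the identity (each term contributes the identity), so $\psi$ is a biholomorphism of a possibly smaller neighborhood onto its image, and by construction $\psi(g \cdot x) = g \cdot \psi(x)$. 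After shrinking to a $U(d)$-saturated neighborhood (possible because the action is by homeomorphisms fixing $P_0$), I let $V$ be the image, which is $U(d)$-invariant, and $\rho = \psi^{-1}$. Because $\rho$ intertwines the two actions and the original action preserves $\omega$, the pulled-back form satisfies $g^* (\rho^* \omega) = \rho^* \omega$ for every $g \in U(d)$; that is, $\rho^*\omega$ is $U(d)$-invariant.

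For the second assertion, recall from the discussion preceding Proposition \ref{prop:rad-hol-chart} that $\rho^*\omega$ admits a Kähler potential $\phi$, i.e. a real-valued $\phi$ on $V$ with $i\partial\overline{\partial}\phi = \rho^*\omega$. I would then average $\phi$ over $U(d)$, replacing it by
\[
  \overline{\phi}(z) = \int_{U(d)} \phi(g \cdot z)\, \dd\mu(g);
\]
since the $U(d)$-action on $V\subset\mathbb{C}^d$ is holomorphic and $\rho^*\omega$ is invariant, $\overline{\phi}$ is still a potential for $\rho^*\omega$ (the operator $i\partial\overline{\partial}$ commutes with pullback by holomorphic maps, and the average of the invariant form is itself), and $\overline{\phi}$ is now genuinely $U(d)$-invariant. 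Since the tautological action of $U(d)$ on $\mathbb{C}^d$ is transitive on each sphere $\{|z| = r\}$, any $U(d)$-invariant function depends only on $|z|$, hence only on the distance to the origin. Real-analyticity is inherited from the real-analyticity of $\omega$ in the holomorphic structure (noted in the text after Definition \ref{def:homo}): $\rho^*\omega$ is real-analytic, the local solvability of $i\partial\overline{\partial}\phi = \rho^*\omega$ produces a real-analytic $\phi$ by the real-analytic $\partial\overline{\partial}$-lemma, and averaging a real-analytic function over a compact Lie group acting real-analytically preserves real-analyticity. The main point requiring care is the linearization step — ensuring the averaged chart is genuinely biholomorphic on a saturated neighborhood — but this is exactly Bochner's theorem and goes through because $U(d)$ is compact and the fixed point is isolated.
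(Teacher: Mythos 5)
Your proposal is correct and follows essentially the same route as the paper: both linearize the local $U(d)$-action by averaging a holomorphic chart over the group with respect to Haar measure (your $\psi(x)=\int_{U(d)}g^{-1}\cdot\psi_0(g\cdot x)\,\dd\mu(g)$ is the paper's $\sigma$ up to the substitution $g\mapsto g^{-1}$), check that the differential at the fixed point is the identity to get a biholomorphism onto a $U(d)$-invariant neighbourhood, and then average an arbitrary (real-analytic) K\"ahler potential over $U(d)$, using invariance of $\omega$ and transitivity of $U(d)$ on spheres to conclude radiality. No substantive difference from the paper's argument.
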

\begin{proof}
  Let $\rho_0:V_0\mapsto U_0$ be any local holomorphic chart to a
  neighbourhood of $P_0$, with $\rho_0(0)=P_0$.
  
  Since $M$ is homogeneous, there exists an open set $P_0\in
  U_1\subset U_0$ and an action of $U(n)$ on $U_1$  such that, for any $g\in U(d)$, one has
  \begin{align*}
    D(x\mapsto \rho_0^{-1}(g\cdot \rho_0(x)))(0)&=g\\
    (g\cdot)^*J&=J\\
    (g\cdot)^*\omega &= \omega.
  \end{align*}
  In particular, for $g\in U(d)$, the map $\rho_g:x \mapsto g\cdot
  \rho_0(g^{-1}x)$ is a biholomorphism from $V_2=\bigcap_{g\in U(d)}g\circ
  \rho_0^{-1}(U_1)$ onto its image $U_2(g)$.

  For $x\in \bigcap_{g\in U(d)}U_2(g)$, let us define
  \[
    \sigma(x)=\int_{U(d)}\rho_g^{-1}(x) \dd\mu_{Haar}(g).
  \]
  Then $D(\sigma\circ \rho_0)(0)=I$. Hence, $\sigma$ is a biholomorphism,
  from a small $U(d)$ invariant open set $U\ni P_0$ into a small
  $U(d)$ invariant open set $V\ni 0$. By construction $\sigma$ is
  $g$-equivariant, in the sense that $\sigma(gx)=g\cdot
  \sigma(x)$. Then $\sigma^{-1}$ is the requested chart since $\omega$
  is invariant under the action of $U(d)$ on $U$.

  Let us proceed to the second part of the Proposition. We first let
  $\phi_1$ be any real-analytic K\"ahler potential in the chart
  $\sigma^{-1}$. We then define
  \[
    \phi(x)=\int_{g\in U(n)}\phi_1(gx)\dd \mu_{Haar}(g).
  \]
  Then $\phi$ is a radial function since $U(d)$ acts transitively on
  the unit sphere. Moreover, since $\sigma_*\omega$ is $U(d)$-invariant
  then $x\mapsto \phi_1(gx)$ is a K\"ahler potential, so that the mean
  value $\phi$ is a K\"ahler potential.
\end{proof}

\begin{rem}
  There is exactly one degree of freedom in the choice of the chart
  $\rho$ in Proposition \ref{prop:rad-hol-chart}: the precomposition by a scaling
  $z\mapsto \lambda z$ preserves all requested properties. In general, the
  metric $\sigma_*\Re(\omega)$, at zero, is a constant times the
  standard metric. This constant can be modified by the scaling
  above. Hence, without loss of generality, one can choose the chart
  so that the K\"ahler potential has the following Taylor expansion at
  zero:
  \[
    \phi(x)=\frac{|x|^2}{2}+O(|x|^3),
  \]
  so that the metric $\sigma^*g$, at zero, is the standard metric.
\end{rem}

\begin{defn}
  A chart satisfying the conditions of Proposition
  \ref{prop:rad-hol-chart}, such that the radial K\"ahler potential
  has the following Taylor expansion at zero:
  \[
    \phi(x)=\frac{|x|^2}2+O(|x|^3),
  \]
  is called a \emph{radial holomorphic chart}.
\end{defn}

The following elementary fact will be used extensively:
\begin{prop}
  \label{prop:Decay-Kah-pot}
  The radial K\"ahler potential $\phi$ of a radial holomorphic chart is strongly convex. In particular, for
  all $x\neq 0$ in the domain of $\phi$ one has $\phi(x)>0$.
\end{prop}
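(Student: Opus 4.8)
The plan is to reduce the statement to a one-dimensional convexity property of the radial profile of $\phi$, controlling the tangential direction by the Kähler (positivity) condition and the radial direction by the normalization at the origin.

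First I would use radiality to write $\phi(x)=f(|x|^2)$; since $\phi$ is real-analytic and radial, $f$ is a smooth function of $s=|x|^2$ on $[0,s_{\max})$, and the normalization $\phi(x)=\tfrac12|x|^2+O(|x|^3)$ gives $f(0)=0$ and $f'(0)=\tfrac12$. A direct computation gives both Hessians explicitly: the complex Hessian $(\partial_j\overline\partial_k\phi)$ has eigenvalues $f'(s)$ (multiplicity $d-1$) and $f'(s)+sf''(s)$ (multiplicity $1$), while the real Hessian $\Hess_{\R}\phi$ has eigenvalues $2f'(s)$ (multiplicity $2d-1$, tangential) and $2f'(s)+4sf''(s)$ (multiplicity $1$, radial). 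Thus strong convexity of $\phi$ is equivalent to the two inequalities $f'(s)>0$ and $f'(s)+2sf''(s)>0$ for all $s>0$ in the domain.

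Next I would extract $f'>0$ from positivity of the Kähler form, i.e. from $\rho^*\omega=i\partial\overline\partial\phi>0$. Positivity of the complex Hessian forces the eigenvalue $f'(s)+sf''(s)$ to be positive; since $f'(s)+sf''(s)=(sf'(s))'$ and $sf'(s)$ vanishes at $s=0$, we obtain $sf'(s)>0$, hence $f'(s)>0$ for every $s>0$ in the chart. This already yields the tangential inequality $2f'>0$; moreover $f$ is then strictly increasing, so $f(s)>f(0)=0$ for $s>0$, which is precisely the asserted positivity $\phi(x)>0$ for $x\neq 0$. I would emphasize that this half uses only positivity of $\omega$, is valid on the entire chart, and works uniformly in $d\geq 1$ (for $d\geq 2$ the eigenvalue $f'$ is positive outright).

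The remaining inequality $f'(s)+2sf''(s)>0$ carries the genuine content and is the only real obstacle: the Kähler condition supplies $f'+sf''>0$ but not $f'+2sf''>0$, so in the radial direction convexity is strictly stronger than plurisubharmonicity, and the extra term $sf''(s)$ may be negative. Here I would appeal to the normalization together with the freedom in the chart: at $s=0$ the radial eigenvalue equals $2f'(0)=1>0$, and $f''$ is continuous, so $f'+2sf''>0$ on a neighbourhood of the origin; since Proposition \ref{prop:rad-hol-chart} only produces a small chart and shrinking preserves all its defining properties, we may assume this holds throughout the domain. For the flat and negatively curved models one has $f''\geq 0$, so $f'+2sf''\geq f'>0$ automatically and no shrinking is needed; only in the positively curved case, where $f''<0$ and the radial eigenvalue changes sign at a finite radius, is the smallness of the chart actually used. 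Combining the two inequalities shows $\Hess_{\R}\phi$ is positive definite, i.e. $\phi$ is strongly convex, and the positivity of $\phi$ follows as above.
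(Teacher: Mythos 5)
Your proof is correct, and it is in fact more careful than the paper's own argument, which follows the same general strategy (radial symmetry, positivity of the Levi form, and the normalization at the origin) but contains two inaccuracies that your computation exposes. First, the paper asserts that for radial $\phi$ the Levi form equals $\frac{\partial^2\phi}{\partial r^2}(r)\,\Id$; as your eigenvalue computation shows, the Levi form of $f(|z|^2)$ has eigenvalues $f'$ and $f'+sf''$, whereas $\frac{\partial^2\phi}{\partial r^2}=2f'+4sf''$, so this identity fails already for $\phi=|z|^2/2$. Second, the paper deduces strong convexity from $\frac{\partial^2\phi}{\partial r^2}>0$ alone, ignoring the tangential eigenvalue $2f'$, and it never addresses the point you isolate: the K\"ahler condition gives $f'+sf''>0$ but not $f'+2sf''>0$, and for the Fubini--Study profile $f(s)=\tfrac12\log(1+s)$ the radial eigenvalue $\frac{1-s}{(1+s)^2}$ does change sign at finite radius, so strong convexity genuinely fails on a chart that is too large. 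Your fix --- shrinking the chart, which is harmless because the local model is universal and so the shrinking can be made uniform in the base point, compatibly with Proposition \ref{prop:radius} --- is the right one. Your integration argument $sf'(s)=\int_0^s(f'+tf'')\,\dd t>0$ to obtain $f'>0$ on the whole chart (needed when $d=1$, where $f'$ is not itself a Levi eigenvalue) is a nice touch absent from the paper; note that it already yields $\phi>0$ off the origin on the \emph{entire} chart, independently of the strong convexity statement, and this positivity is the only consequence of the Proposition actually used later (in Propositions \ref{prop:appr-close-HN} and \ref{prop:reproducing}).
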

\begin{proof}
  From the Taylor expansion $\phi(x)=\frac{|x|^2}2+O(|x|^3)$, one
  deduces that the real Hessian matrix of $\phi$ is positive definite
  at zero. Near any point $x\neq 0$ which belongs to the domain of
  $\phi$, in spherical coordinates the function $\phi$ depends only on
  the distance $r$ to the origin. The Levi form $\frac{\partial^2
    \phi}{\partial z_i\partial \overline{z_j}}(x)$, which is Hermitian
  positive definite (since $\phi$ is strongly pseudo-convex), is
then equal to $\frac{\partial^2\phi}{\partial r^2}(r)Id$. In
particular, $\frac{\partial^2\phi}{\partial r^2}>0$ everywhere, so that
  $\phi$ is strongly convex at $x$.
\end{proof}
\section{Approximate coherent states}
\label{sec:appr}
We first recall the notion of coherent states in Berezin-Toeplitz
quantization.

 \begin{defn}\label{def:Coh-state}
   Let $(P_0,v)\in
   L$. We define the associated coherent state, which is a section of
   $L^{\otimes N}$, as follows:
   \[
     \psi_{P_0,v}^N=\left(u\mapsto \langle u(P_0),v\rangle_{h}\right)^{*_{H_0(M,L^{\otimes N})}}.
           \]
         \end{defn}
         That is, the evaluation map $u\mapsto \langle u(P_0),v\rangle_{h}$ is a linear operator on $H_0(M,L^{\otimes
                 N})$, and by the Riesz representation theorem, there
               exists $\psi_{P_0,v}^N$ such that linear map is
               $\langle \psi_{P_0,v}^N,\cdot\rangle$.

Let us use the radial charts above to build an approximation for coherent states on
a homogeneous K\"ahler manifold.

\begin{prop}\label{prop:radius}
  There exists
  $r>0$ such that the following is true.
  \vspace{-0.5em}
  \begin{enumerate}
\item 
  Let $P_0\in M$. There exists a radial holomorphic chart near $P_0$,
  whose domain contains $B(0,r)$.

\item Let $\phi$ denote the radial K\"ahler potential near $P_0$. For all $N\geq 1$ the quantity \[a(N)=\int_{B(0,r)}\exp(-N\phi(|z|))\dd z
    \dd \overline{z}\] is well-defined and does not depend on $P_0$.
  \end{enumerate}
\end{prop}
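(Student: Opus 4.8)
The plan is to reduce both statements to a single structural fact: by homogeneity, the radial K\"ahler potential $\phi$ is, as a function of $|z|$, the \emph{same} near every point of $M$. I would fix a base point $P_* \in M$ together with a radial holomorphic chart $\rho_*:V_*\mapsto U_*$ near it (Proposition \ref{prop:rad-hol-chart}), with radial potential $\phi_*$ and $B(0,r_*)\subset V_*$. For an arbitrary $P_0\in M$, the first bullet of Definition \ref{def:homo} provides an $\omega$-preserving biholomorphism $\tau$ from a neighbourhood of $P_0$ onto a neighbourhood of $P_*$ with $\tau(P_0)=P_*$. I would then set $\rho_{P_0}:=\tau^{-1}\circ\rho_*$; since $\tau$ is a biholomorphism preserving $\omega$ (hence also $J$ and the induced metric), one has $\rho_{P_0}^*\omega=\rho_*^*(\tau^{-1})^*\omega=\rho_*^*\omega$. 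In particular $\rho_{P_0}^*\omega$ is again $U(d)$-invariant and $D\rho_{P_0}(0)$ is an isometry, so $\rho_{P_0}$ is a radial holomorphic chart near $P_0$ whose radial potential is exactly $\phi_*$.

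This already yields the independence of $\phi$ on the base point. The only point requiring care is the uniformity of the radius: the domain of $\rho_{P_0}$ is controlled both by $r_*$ and by the size of the neighbourhood on which $\tau$ is defined. I would invoke compactness of $M$ to take these symmetry neighbourhoods of a size bounded below uniformly in $P_0$ (equivalently, a uniform lower bound on the radius up to which the chart stays a biholomorphism), and then choose a single $r\in(0,r_*)$ below the resulting common radius. This proves the first assertion, with the radial potential $\phi=\phi_*$ near every $P_0$.

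The second assertion is then immediate. Over $B(0,r)$ the integrand $\exp(-N\phi(|z|))$ depends only on $\phi$ and on $|z|$, both independent of $P_0$, so $a(N)$ is independent of $P_0$. For well-definedness, Proposition \ref{prop:Decay-Kah-pot} gives $\phi\geq 0$ on $B(0,r)$ with equality only at the origin, whence $0<\exp(-N\phi(|z|))\leq 1$; this is a bounded continuous function on the bounded set $B(0,r)$, so the integral converges and is strictly positive.

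The main obstacle is the first paragraph: one must check that transporting the base chart by the local symmetry $\tau$ genuinely yields a \emph{radial} chart with the same normalized potential, i.e.\ that preservation of $\omega$ and $J$ forces $\rho_{P_0}^*\omega=\rho_*^*\omega$ and that $D\rho_{P_0}(0)$ is an isometry, so that the normalization $\phi(x)=\frac{|x|^2}{2}+O(|x|^3)$ is inherited. The uniform radius (via compactness) and the convergence of the integral are then routine by comparison.
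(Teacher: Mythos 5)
Your proposal is correct and follows essentially the same route as the paper: transport a fixed radial holomorphic chart at a reference point to an arbitrary $P_0$ via the local homogeneity, use compactness of $M$ to get a uniform radius, and observe that the radial potential is unchanged by construction, so that $a(N)$ is a convergent integral of a bounded continuous function independent of $P_0$. Your verification that $\rho_{P_0}^*\omega=\rho_*^*\omega$ (hence the same normalized potential) is slightly more explicit than the paper's, but it is the same argument.
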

\begin{proof}
  \fixitemthm
  \begin{enumerate}
  \item Let $P_1\in M$. By Proposition
    \ref{prop:rad-hol-chart} there exists a radial holomorphic chart
    near $P_1$. 
    Since $M$ is homogeneous, a small neighbourhood of any
    $P_0\in M$, of size independent of $P_0$ since $M$ is compact, can be mapped into a neighbourhood of 
    $P_1\in M$. By restriction of the radial holomorphic chart of
    Proposition \ref{prop:rad-hol-chart} to this neighbourhood, whose preimage contains a
    small ball around zero, this defines a chart around $P_0$. Since
    $M$ is compact, there is a radius $r$ such that, for every $P_0\in
    M$, the closed ball $\overline{B(P_0,r)}$ is
    contained in the domain of the chart around $P_0$.
  \item By construction of the chart above, the K\"ahler potential
    $\phi$ does not depend on $P_0$. Moreover, $\phi$ is a smooth
    function on $\overline{B(0,r)}$, hence the claim..
  \end{enumerate}
\end{proof}
\begin{rem}
  We will see at the end of the proof of Theorem \ref{thr:Szeg-homo}
  that $a(N)^{-1}$ is
  exponentially close to a polynomial in $N$.
\end{rem}

From now on, $r$ is as in the claim of Proposition \ref{prop:radius}.

\begin{prop}
   Let $(P_0,v)\in L$. The action of $U(n)$ on a
  neighbourhood $U$ of $P_0$ in $M$ can be lifted in an action on
  $L_U$.
\end{prop}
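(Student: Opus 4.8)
The plan is to build the lift by hand in a holomorphic frame adapted to the radial structure, exploiting that the radial K\"ahler potential is $U(d)$-invariant. First I would apply Proposition~\ref{prop:rad-hol-chart} to obtain, after shrinking, a genuine action $x\mapsto g\cdot x$ of $U(d)$ on a neighbourhood $U$ of $P_0$ by $\omega$- and $J$-preserving biholomorphisms, with radial K\"ahler potential $\phi$ satisfying $\phi(g\cdot x)=\phi(x)$. Taking $U$ to be the chart image of a ball in $\C^d$ (which is $U(d)$-invariant), the set $U$ is contractible and Stein, so $L_U$ is holomorphically trivial and admits a non-vanishing holomorphic frame.

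The one preliminary that requires care is the choice of this frame. Any non-vanishing holomorphic frame $s_0$ of $L_U$ has K\"ahler potential $\phi_0=-\frac12\log|s_0|_h$, and $\phi$ and $\phi_0$, being K\"ahler potentials of the same form, differ by a pluriharmonic function; since $U$ is simply connected, I may pick a holomorphic $f$ with $\Re f=2(\phi_0-\phi)$ and set $s=e^{f}s_0$. This produces a holomorphic frame with $|s|_h=e^{-2\phi}$, so that the pointwise norm $|s|_h$ is $U(d)$-invariant: $|s(g\cdot x)|_h=|s(x)|_h$.

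Given such a frame, the lift is immediate. Writing each element of $(L_U)_x$ uniquely as $c\,s(x)$, I define $\widetilde{g}(c\,s(x))=c\,s(g\cdot x)$. This is well defined since $s$ is a global frame, it is fibrewise $\C$-linear and covers $x\mapsto g\cdot x$, and in the trivialisation induced by $s$ it reads $(x,c)\mapsto(g\cdot x,c)$, which is holomorphic because the action on $U$ is. It is a genuine action rather than a merely projective one because the cocycle is trivial: $\widetilde{gh}(c\,s(x))=c\,s(gh\cdot x)=\widetilde{g}(\widetilde{h}(c\,s(x)))$. Moreover it preserves $h$, since $\|\widetilde{g}(c\,s(x))\|_h=|c|\,|s(g\cdot x)|_h=|c|\,|s(x)|_h=\|c\,s(x)\|_h$.

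The crux is exactly this last point. For an arbitrary frame the naive formula $c\,s(x)\mapsto c\,s(g\cdot x)$ still defines a holomorphic action, but it is not metric-preserving; it is precisely the radiality of the K\"ahler potential supplied by the radial chart --- equivalently, the $U(d)$-invariance of $|s|_h$ for the adapted frame --- that upgrades the lift to a unitary one. This unitarity is what will later allow $U(d)$ to act unitarily on $L^2(U,L_U^{\otimes N})$, and hence to be used in the averaging that produces the approximate coherent states.
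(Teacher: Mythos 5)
Your proof is correct and follows essentially the same route as the paper: trivialise $L_U$ in the radial chart so that the Hermitian metric becomes $e^{-\text{const}\cdot\phi}|u|^2$, then extend the $U(d)$-action trivially to the fibre, which preserves $h$ because $\phi$ is radial. You merely make explicit a step the paper leaves implicit, namely that the frame can be adjusted by $e^{f}$ with $f$ holomorphic (using that two K\"ahler potentials differ by a pluriharmonic function on the simply connected set $U$) so that its pointwise norm is exactly $e^{-2\phi}$; this is a worthwhile clarification but not a different argument.
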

\begin{proof}
  By definition of $L$, if $V$ is the preimage of $U$ by a radial
  holomorphic chart, the bundle $(L_U,h)$ is isomorphic to
  \[
    (V\times \C,\exp(-\phi(z))|u|^2).
  \]
  Since $\phi$ is invariant under $U(n)$, the linear action of $U(n)$
  on $V$ can be trivially extended to $V\times \C$ and preserves the
  metric.
\end{proof}

In order to treat local holomorphic sections of a prequantum bundle
over a quantizable compact homogeneous K\"ahler manifold, let
us define the Ancillary space and the approximate coherent states:

\begin{defn}
  Let $\phi$ be the radial K\"ahler potential on $M$ and $r$ be as
  in Proposition \ref{prop:radius}. Let $N\in \N$. The ancillary space is defined as
  \[
    A_N=\left\{u\text{ holomorphic on
      $B(0,r)$},\int_{B(0,r)}e^{-N\phi(z)}|u|^2\leq +\infty\right\}.
\]
It is a Hilbert space with the scalar product
\[
  \langle
  u,v\rangle_{A_N}=\int_{B(0,r)}e^{-N\phi(z)}u(z)\overline{v(z)}\dd z.
\]

  The set $A_N$ consists of functions belonging to the usual Hardy space of the
  unit ball, but the scalar product is twisted by the K\"ahler
  potential $\phi$.

  Since the function $\phi$ appearing in the definition of $A_N$ is a
  universal local K\"ahler potential on $M$, for each $(P_0,v)\in L^*$
  there is a natural isomorphism (up to
  multiplication of all norms by $\|v\|_{h}$)
  $\mathfrak{S}_{P_0,v}^N$ between $A_N$
  and the space of $L^2$ local holomorphic sections $H_0(U,L^{\otimes
    N})$ where $U=\sigma_{P_0}^{-1}(B(0,r))$. We define
  $\widetilde{\psi}_{P_0,v}^N$ as the element of
  $H_0(U,L^{\otimes N})$ associated with the constant function
  $a(N)^{-1}\in A_N$.

We set $\widetilde{\psi}_{P_0,v}^N$ to be zero outside
$\sigma^{-1}(B(0,r))$ so that $\widetilde{\psi}_{P_0,v}^N\in
L^2(M,L^{\otimes N})$.
The function $\widetilde{\psi}_{P_0,v}^N$ is equivariant with respect
to $v$: one has
\[
  \widetilde{\psi}_{P_0,v}^N=\left(\overline{v}/\overline{v'}\right)^N\widetilde{\psi}_{P_0,v'}^N.\]
This allows us to define the approximate normalized coherent state
$\widetilde{\psi}_{P_0}$ as an element of $L^2(M,L^{\otimes N})\otimes
\overline{L}_{P_0}^{\otimes N}$.
\end{defn}

Let us prove that the approximate coherent states are very close to
$H_N(M,L)$:
\begin{prop}
  \label{prop:appr-close-HN}
  There exists $c>0$ and $C>0$ such that, for all $P_0\in M$,
  \[
    \|S_N\widetilde{\psi}_{P_0}^N-\widetilde{\psi}_{P_0}\|_{L^2}\leq
    Ce^{-cN}.
  \]
\end{prop}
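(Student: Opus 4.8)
The plan is to invoke Kohn's estimate \eqref{eq:Kohn}, which reduces the claim to showing that $\overline{\partial}\widetilde{\psi}_{P_0}^N$ is exponentially small in $L^2$. Indeed, since $S_N$ is the orthogonal projector onto $\ker\overline{\partial}$, estimate \eqref{eq:Kohn} gives
\[
  \|\widetilde{\psi}_{P_0}^N - S_N\widetilde{\psi}_{P_0}^N\|_{L^2} \leq C^{-1}\|\overline{\partial}\widetilde{\psi}_{P_0}^N\|_{L^2},
\]
so it suffices to bound the right-hand side. The essential point is that $\widetilde{\psi}_{P_0}^N$ is \emph{genuinely holomorphic} on the interior of its support $\sigma^{-1}(B(0,r))$: by construction it corresponds to the constant function $a(N)^{-1}$ in the ancillary space $A_N$, and the isomorphism $\mathfrak{S}_{P_0,v}^N$ identifies $A_N$ with honest local holomorphic sections. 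Hence $\overline{\partial}\widetilde{\psi}_{P_0}^N$ vanishes everywhere except on the boundary sphere $\sigma^{-1}(\partial B(0,r))$, where the abrupt cutoff (setting the state to zero outside the chart) creates a jump.

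The second and central step is therefore to control the size of the state near the boundary $\{|z|=r\}$ in the chart. Here I would use Proposition \ref{prop:Decay-Kah-pot}: the radial K\"ahler potential $\phi$ is strongly convex with $\phi(x)>0$ for $x\neq 0$, so there is a constant $\delta>0$ with $\phi(z)\geq \delta$ whenever $|z|=r$. In the $A_N$ model the pointwise norm of the section is $e^{-N\phi(z)}|a(N)^{-1}|^2$ integrated against the volume, so near the boundary the weight $e^{-N\phi(z)}\leq e^{-N\delta}$ is exponentially small. One must pair this with the normalization factor $a(N)^{-1}$. Since $\phi(0)=0$ and $\phi$ grows quadratically near the origin, a Laplace/stationary-phase lower bound gives $a(N)=\int_{B(0,r)}e^{-N\phi}\gtrsim N^{-d}$, so $a(N)^{-1}$ grows only polynomially; the exponential decay $e^{-N\delta}$ from the boundary dominates this polynomial factor and yields an overall $Ce^{-cN}$ bound for any $c<\delta$.

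Concretely, to make $\overline{\partial}$ rigorous one should regard the cutoff not as a discontinuous truncation but introduce (or note) that the relevant $L^2$ norm of $\overline{\partial}$ of the truncated section is governed, via a distributional or integration-by-parts computation, by the trace of $\widetilde{\psi}_{P_0}^N$ and its normal derivative on $\partial B(0,r)$. Both quantities are estimated by the same mechanism: the pointwise norm is $a(N)^{-2}e^{-N\phi}$, bounded by $a(N)^{-2}e^{-N\delta}$ on the boundary, and the holomorphic derivatives of the constant function contribute only the derivatives of the trivializing section, which are bounded uniformly in $N$ on the compact shell $|z|=r$. Integrating over the sphere, whose area is uniformly bounded since $r$ is independent of $P_0$ by Proposition \ref{prop:radius}, gives the desired exponential bound with constants uniform in $P_0$.

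The main obstacle I anticipate is the treatment of $\overline{\partial}$ acting across the cutoff: the naive truncation produces a distributional singularity on the sphere, and one must argue carefully that the relevant contribution to $\|\overline{\partial}\widetilde{\psi}_{P_0}^N\|_{L^2}$ is controlled by boundary values rather than being ill-defined. The cleanest route is probably to smooth the cutoff in a fixed collar $\{r-\eta \leq |z| \leq r\}$, where the smoothed derivative of the cutoff is $O(1)$ and is multiplied by the exponentially small trace $a(N)^{-1}e^{-N\phi/2}$; the exponential smallness then survives the smoothing, and the uniformity in $P_0$ is inherited from the uniform choice of $r$ and $\eta$.
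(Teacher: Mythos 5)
Your proposal is correct and follows essentially the same route as the paper: both reduce the claim via Kohn's estimate \eqref{eq:Kohn} to bounding $\|\overline{\partial}\|_{L^2}$ of a smoothly cut-off copy of $\widetilde{\psi}_{P_0}^N$, using that the state is holomorphic on $\sigma^{-1}(B(0,r))$ and that $\phi$ is bounded below on the annulus where the cutoff varies (Proposition \ref{prop:Decay-Kah-pot}), and then compare back to the original state using $\|S_N\|\leq 1$. Your initial detour through the distributional $\overline{\partial}$ of the sharp truncation is unnecessary, but your ``cleanest route'' of smoothing the cutoff in a fixed collar is exactly the paper's argument with the test function $\chi$ supported in $[0,r]$ and equal to $1$ on $[0,r/2]$.
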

\begin{proof}
  Let $\chi$ denote a test function on $\R$ which is smooth and such
  that $\chi=1$ on $[0,\frac r2]$ and $\chi=0$ on $[r,+\infty)$.

  The section $(\chi\circ|\sigma|) \widetilde{\psi}_{P_0}^N$ is smooth; since $\widetilde{\psi}_{P_0}^N$ is holomorphic on $\sigma^{-1}(B(0,r))$ and decays
    exponentially fast far from $P_0$, one has
    \[
      \|\overline{\partial}(\chi\circ|\sigma|) \widetilde{\psi}_{P_0}^N\|_{L^2}\leq Ce^{-cN}.
    \]

    From Kohn's estimate \eqref{eq:Kohn} we deduce that
     \[
    \|S_N(\chi\circ|\sigma|)\widetilde{\psi}_{P_0}^N-(\chi\circ|\sigma|)\widetilde{\psi}_{P_0}\|_{L^2}\leq
    Ce^{-cN}.
  \]
  In addition, since $\phi>c$ on $B(0,r)\setminus B(0,r/2)$, one has
  \[
    \|(\chi\circ|\sigma|) \widetilde{\psi}_{P_0}^N-\widetilde{\psi}_{P_0}^N\|_{L^2}\leq Ce^{-cN}.
  \]

  Since $S_N$ is an orthogonal projector, its operator norm is bounded
  by $1$, so that the previous estimates implies
  \[
    \|S_N(\chi\circ |\sigma|) \widetilde{\psi}_{P_0}^N-S_N\widetilde{\psi}_{P_0}^N\|_{L^2}\leq Ce^{-cN}.
  \]
  This ends the proof.
\end{proof}
To show that our approximate coherent states are indeed exponentially
close to the actual coherent states we will use the following lemma.

\begin{lem}
  \label{lem:lin-form-AN}
  Any continuous linear form on $A_N$ invariant by linear unitary
  changes of variables is
  proportional to the continuous linear form $v\mapsto \langle
  v,1\rangle$.

  In particular, the continuous linear form $A_N\ni u\mapsto u(0)$ is
  equal to the scalar product with the constant function $a(N)^{-1}$
\end{lem}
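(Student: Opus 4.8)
The plan is to prove the first statement by a symmetrization/averaging argument using the $U(d)$-invariance of the inner product on $A_N$, and then derive the second statement as a direct consequence applied to the evaluation functional.

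First I would observe that the inner product $\langle u,v\rangle_{A_N}=\int_{B(0,r)}e^{-N\phi(z)}u(z)\overline{v(z)}\,\dd z$ is itself invariant under linear unitary changes of variables $u\mapsto u\circ g$ for $g\in U(d)$: indeed, $g$ preserves the Lebesgue measure $\dd z$ on $\C^d$, preserves the ball $B(0,r)$, and preserves the weight $e^{-N\phi}$ because $\phi$ is a radial function (it depends only on $|z|$), hence $\phi\circ g=\phi$. Therefore the unitary group $U(d)$ acts on the Hilbert space $A_N$ by isometries $u\mapsto u\circ g$. Now let $\ell$ be any continuous linear form on $A_N$ that is invariant under this action, meaning $\ell(u\circ g)=\ell(u)$ for all $g\in U(d)$ and all $u\in A_N$.

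The main step is to use the Riesz representation theorem: there is a unique $w\in A_N$ with $\ell(u)=\langle u,w\rangle_{A_N}$ for all $u$. The invariance of $\ell$ then forces $w$ to be invariant in the sense that $w\circ g^{-1}=w$ for every $g\in U(d)$. To see this, I would write $\ell(u)=\ell(u\circ g)=\langle u\circ g,w\rangle=\langle u,w\circ g^{-1}\rangle$ using that the action is isometric and invertible, and then uniqueness of the Riesz representative gives $w\circ g^{-1}=w$ for all $g$. Thus $w$ is a $U(d)$-invariant holomorphic function on $B(0,r)$, hence radial. But a holomorphic function that is invariant under all unitary rotations must be constant: expanding $w$ in its homogeneous Taylor components $w=\sum_m w_m$ (with $w_m$ homogeneous of degree $m$ and holomorphic), invariance under the scalar rotations $g=e^{i\theta}\mathrm{Id}\in U(d)$ gives $e^{im\theta}w_m=w_m$ for all $\theta$, forcing $w_m=0$ for $m\geq 1$. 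Hence $w$ is a constant, say $w=\lambda\cdot 1$, and $\ell(u)=\overline{\lambda}\langle u,1\rangle$, which is the desired proportionality.

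For the second statement I would simply check that the evaluation functional $\ell_0:u\mapsto u(0)$ is $U(d)$-invariant, since $(u\circ g)(0)=u(g\cdot 0)=u(0)$; hence by the first part $\ell_0=\overline{\lambda}\langle\cdot,1\rangle$ for some constant. To pin down the constant I evaluate both sides at the test function $u=1$: the left side gives $\ell_0(1)=1$, while the right side gives $\overline{\lambda}\langle 1,1\rangle_{A_N}=\overline{\lambda}\int_{B(0,r)}e^{-N\phi(z)}\,\dd z=\overline{\lambda}\,a(N)$, recalling the definition of $a(N)$ from Proposition \ref{prop:radius}. Therefore $\overline{\lambda}=a(N)^{-1}$, and so $u(0)=\langle u,a(N)^{-1}\rangle_{A_N}$, which is exactly the claim that $u\mapsto u(0)$ is the scalar product with the constant $a(N)^{-1}$. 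I expect the only delicate point to be the verification that $\ell_0$ is genuinely continuous on $A_N$ (so that Riesz applies and a representative exists), which follows from the standard fact that point evaluation is bounded on weighted Bergman/Hardy spaces of holomorphic functions; everything else is routine once the isometric $U(d)$-action and the radiality of $\phi$ are in hand.
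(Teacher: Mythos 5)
Your proof is correct and rests on the same mechanism as the paper's: the action of the diagonal phase rotations in $U(d)$ forces every non-constant holomorphic component to be killed, leaving only the pairing with the constant function, and the normalisation $a(N)^{-1}$ is then fixed by a one-line computation. The only difference is presentational — you phrase the argument through the Riesz representative (showing it is a rotation-invariant, hence constant, holomorphic function), whereas the paper evaluates the linear form directly on the monomial Hilbert basis — so this is essentially the paper's proof in dual form.
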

\begin{proof}
  A Hilbert basis of $A_N$ is given by the normalised monomials
  $e_{\nu}z\mapsto c_{\nu}z^{\nu}$ for $\nu\in \N^d$, for some $c_{\nu}>0$. Special elements of $U(n)$ are the
  diagonal matrices $\diag(\exp(i\theta_1),\ldots,\exp(i\theta_d))$
  which send $e_{\nu}$ into $\exp(i\theta\cdot \nu)e_{\nu}$.

  A linear form $\eta$ invariant under $U(d)$ must be such
  that $\eta(e_{\nu})=\exp(i\theta\cdot \nu)\eta(e_{\nu})$ for every
  $\theta,\nu$. In particular, $\nu\neq 0 \Rightarrow
  \eta(e_{\nu})=0$.
  Since $\eta$ is continuous we deduce that $\eta$ is proportional to
  the scalar product with $e_0=c_01$.

  For the second part of the Proposition we only need to prove that
  the multiplicative factor between the two continuous
  $U(d)$-invariant linear forms of
  $A_N$, evaluation at $0$ on one side, scalar product with
  $a(N)^{-1}$ on the other side, is $1$. By Definition of
  $a(N)$, the scalar product in $A_N$ of $a(N)^{-1}$ with $a(N)^{-1}$ is
  $a(N)^{-1}$, moreover the evaluation at zero of $a(N)^{-1}$ is
  $a(N)^{-1}$, hence the claim.
\end{proof}


The functions $\widetilde{\psi}_{P_0,v}^N$
mimic the definition of coherent states.

\begin{prop}\label{prop:reproducing}
  There exists $c>0$ such that, for any $(P_0,v_0),(P_1,v_1)\in L^*$,
  \begin{itemize}
    \item If $\dist(P_0,P_1)\leq \frac r2$, then
      \[
        \left|\langle
          \widetilde{\psi}_{P_1,v_1}^N,\widetilde{\psi}_{P_0,v_0}^N\rangle-\langle
          \widetilde{\psi}_{P_1,v_1}(P_0),v_0^{\otimes
          N}\rangle_{h}\right|=O(e^{-cN}).
      \]
        \item In general, one has
    \[|\langle
      \widetilde{\psi}_{P_1,v_1}^N,\widetilde{\psi}_{P_0,v_0}^N\rangle|\leq
      Ce^{-cN\dist(P_0,P_1)^2}.
      \]
  \end{itemize}
\end{prop}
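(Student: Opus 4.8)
The plan is to transport both pairings into a single radial holomorphic chart and to use two ingredients: the reproducing identity of Lemma~\ref{lem:lin-form-AN}, which is available precisely because the weight $e^{-N\phi}$ is invariant under unitary changes of variables in such a chart, together with the strong convexity of $\phi$ and the strict positivity $\phi(z)>0$ for $z\neq 0$ (Proposition~\ref{prop:Decay-Kah-pot}), which let me discard every error region at an exponentially small cost.

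For the first item I would fix the radial chart $\sigma_0=\sigma_{P_0}$, whose domain contains $\overline{B(P_0,r)}$ by Proposition~\ref{prop:radius}. The state $\widetilde{\psi}_{P_0,v_0}^N$ is supported in $B(P_0,r)$ and, through the isometry $\mathfrak{S}_{P_0,v_0}^N$, corresponds to the constant function $a(N)^{-1}\in A_N$; hence $\langle \widetilde{\psi}_{P_1,v_1}^N,\widetilde{\psi}_{P_0,v_0}^N\rangle$ equals, in this chart, the $A_N$-pairing of the holomorphic representative $g$ of $\widetilde{\psi}_{P_1,v_1}^N$ with $a(N)^{-1}$. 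The hypothesis $\dist(P_0,P_1)\leq \frac r2$ gives $B(P_0,\frac r2)\subset B(P_1,r)$, so $g$ is genuinely holomorphic on a fixed ball $B(0,c)$ around the origin. I would then split the integral at $|z|=c$: on $|z|\geq c$ the weight $e^{-N\phi}$ is $O(e^{-cN})$ by Proposition~\ref{prop:Decay-Kah-pot}, so that part is negligible, while on $B(0,c)$ the radial invariance of $e^{-N\phi}$ annihilates every non-constant Taylor mode of $g$ (the localized version of the computation in Lemma~\ref{lem:lin-form-AN}), leaving $g(0)$ up to the exponentially small discrepancy between $a(N)$ and $\int_{B(0,c)}e^{-N\phi}$. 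Finally $g(0)$ is exactly the value of $\widetilde{\psi}_{P_1,v_1}^N$ at the centre $P_0$ read against the trivialising frame, that is $\langle \widetilde{\psi}_{P_1,v_1}(P_0),v_0^{\otimes N}\rangle_{h}$, which is the asserted right-hand side.

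For the second item I would argue by concentration. For every $x$ one has $\dist(x,P_0)+\dist(x,P_1)\geq \dist(P_0,P_1)$, so in the integrand $\langle \widetilde{\psi}_{P_1,v_1}^N(x),\widetilde{\psi}_{P_0,v_0}^N(x)\rangle_h$ at least one factor is evaluated far from its own centre. Since the pointwise norm of $\widetilde{\psi}_{P_0,v_0}^N(x)$ equals $|a(N)^{-1}|\,e^{-N\phi(\sigma_0(x))/2}$ and $\phi(\sigma_0(x))\gtrsim \dist(x,P_0)^2$ by strong convexity, integrating the product produces the Gaussian rate $e^{-cN\dist(P_0,P_1)^2}$. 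Equivalently, in the regime $\dist(P_0,P_1)\leq \frac r2$ one may combine the first item with the pointwise bound $\|\widetilde{\psi}_{P_1,v_1}^N(P_0)\|_h\lesssim |a(N)^{-1}|\,e^{-N\phi(\sigma_1(P_0))/2}$ together with $\phi(\sigma_1(P_0))\gtrsim \dist(P_0,P_1)^2$; the polynomial prefactor $a(N)^{-1}\sim N^d$ is absorbed into a slightly smaller constant $c$ as soon as $\dist(P_0,P_1)$ is bounded below, while for $\dist(P_0,P_1)>r$ the supports are essentially disjoint.

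The main obstacle is to make the reproduction \emph{exact up to an exponentially small error}, rather than merely up to a power of $N$, and this is exactly where homogeneity enters: only in a radial chart is the weight $e^{-N\phi}$ invariant under unitary changes of variables, so that the reproducing identity of Lemma~\ref{lem:lin-form-AN} applies with the cross terms cancelling identically and the sole error coming from the zone $|z|\geq c$, where $\phi$ is bounded below. The remaining work is bookkeeping that I expect to be routine: checking that the truncation to the chart, the replacement of $\widetilde{\psi}_{P_1,v_1}^N$ by its holomorphic representative on $B(0,c)$, and the tracking of the $v$-equivariant normalisation each contribute only $O(e^{-cN})$ relative to the scale $a(N)^{-1}\sim N^{d}$.
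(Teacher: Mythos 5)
Your proposal is correct and follows essentially the same route as the paper: the first item rests on the $U(d)$-invariance of the weight in a radial chart (Lemma~\ref{lem:lin-form-AN}) plus the strict positivity of $\phi$ away from the origin to control the truncation, and the second item is the same concentration/support-splitting argument using $\phi(x)\gtrsim |x|^2$. The only difference is organizational: the paper routes the reproducing identity through the globally holomorphic section $S_N\widetilde{\psi}_{P_1,v_1}^N$ and then swaps back via Proposition~\ref{prop:appr-close-HN}, whereas you perform the angular averaging directly on the truncated integral, which makes the exponentially small error terms more explicit but is not a genuinely different method.
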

\begin{proof}\fixitemthm
  \begin{itemize}
    \item
  The continuous linear functional on $A_N$ which sends $u$ to $u(0)$
  is invariant under the action of $U(n)$ (since $0$ is a fixed
  point), so that, by Lemma \ref{lem:lin-form-AN}, it is proportional to the scalar
  product with a constant. This property, read in the map
  $\mathfrak{S}_{P_0,v_0}^N$, means that, for every $(P_1,v_1)\in L$ the
  scalar product \[\langle
    \widetilde{\psi}_{P_0,v_0}^N,\widetilde{\psi}_{P_1,v_1}^N\rangle\] is
  a constant (independent of $P_1$) times
  $\langle S_N\widetilde{\psi}_{P_1,v_1}(P_0),v_0^{\otimes
    N}\rangle_{h}$. The normalizing
  factor $a(N)$ is such that both sides are equal to $1$ if
  $P_1=P_0$. This ends the proof since $S_N$ is almost identity on the
  almost coherent states.
  
\item If $\dist(P_0,P_1)\geq 2r$ then $\widetilde{\psi}_{P_0,v_0}^N$ and
  $\widetilde{\psi}_{P_1,v_1}^N$ have disjoint support so that the scalar product
  is zero.

  If $r/2\leq \dist(P_0,P_1)\leq 2r$ then $\widetilde{\psi}_{P_1,v_1}^N$ is
  exponentially small on $B(P_0,r/4)$ and $\widetilde{\psi}_{P_0,v_1}^N$
  is exponentially small outside this ball so that the scalar product
  is smaller than $Ce^{-cN(4r)^2}$ for some $c>0$.

  If $P_1\in B(P_0,r/2)$, one can apply the previous point; the claim
  follows from the fact that
  $\phi(|x|)\geq c|x|^2$ on $B(P_0,r/2)$.
  \end{itemize}
\end{proof}

\section{Approximate Bergman projector}
\label{sec:appr-szegho-proj}

We can now define the approximate Bergman projector by its kernel:
$\widetilde{S}_N$ is a function on $\overline{L}^{\otimes N}\boxtimes
L^{\otimes N}$ which is linear in the fibres (or, equivalently, a section of $L^{\otimes N}\boxtimes
\overline{L}^{\otimes N}$)
defined by the formula:
\[
\widetilde{S}_N((x,v),(y,v'))=\langle \widetilde{\psi}_{x,v}^N,\widetilde{\psi}_{y,v'}^N\rangle.
\]

We wish to prove that this operator is very close to the actual
Bergman projector, defined by the actual coherent states $\psi_{P_0,v}^N$:
\begin{prop}\label{prop:SNappr=exact}
Let $(P_0,v)\in L$. Then 
  $S_N\widetilde{\psi}_{P_0,v}^N=\psi_{P_0,v}^N.$
  \end{prop}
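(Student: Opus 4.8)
The plan is to reduce the claimed identity to a reproducing property for $\widetilde\psi_{P_0,v}^N$ and then read that property off inside the ancillary space $A_N$. Both $S_N\widetilde\psi_{P_0,v}^N$ and $\psi_{P_0,v}^N$ lie in $H_0(M,L^{\otimes N})$, so it suffices to show they induce the same continuous linear functional on $H_0(M,L^{\otimes N})$, i.e.\ that $\langle S_N\widetilde\psi_{P_0,v}^N, u\rangle = \langle\psi_{P_0,v}^N, u\rangle$ for every $u\in H_0(M,L^{\otimes N})$. By Definition \ref{def:Coh-state} the right-hand side is the evaluation $\langle u(P_0), v^{\otimes N}\rangle_h$. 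For the left-hand side I would use that $S_N$ is an orthogonal projector, hence self-adjoint and equal to the identity on $H_0(M,L^{\otimes N})$, so that $\langle S_N\widetilde\psi_{P_0,v}^N, u\rangle = \langle\widetilde\psi_{P_0,v}^N, S_N u\rangle = \langle\widetilde\psi_{P_0,v}^N, u\rangle$. The whole statement is therefore equivalent to the single reproducing identity
\[
  \langle\widetilde\psi_{P_0,v}^N, u\rangle_{L^2(M)} = \langle u(P_0), v^{\otimes N}\rangle_h \qquad\text{for all } u\in H_0(M,L^{\otimes N}).
\]

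To establish it I would pass to the ancillary space. Since $\widetilde\psi_{P_0,v}^N$ is supported in $U=\sigma^{-1}(B(0,r))$, the pairing on the left is purely local and may be computed over $U$. Transporting it through the isomorphism $\mathfrak{S}_{P_0,v}^N\colon A_N\to H_0(U,L^{\otimes N})$, the state $\widetilde\psi_{P_0,v}^N$ corresponds to the constant function $a(N)^{-1}$, while the restriction $u|_U$ corresponds to some holomorphic $\widehat u\in A_N$; thus, up to the normalization carried by $\mathfrak{S}_{P_0,v}^N$, the left-hand side equals the $A_N$-scalar product $\langle a(N)^{-1}, \widehat u\rangle_{A_N}$. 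At this point Lemma \ref{lem:lin-form-AN} applies directly: the functional $w\mapsto\langle a(N)^{-1}, w\rangle_{A_N}$ is exactly evaluation at the origin, so $\langle a(N)^{-1}, \widehat u\rangle_{A_N}=\widehat u(0)$. It then remains to unwind the definition of $\mathfrak{S}_{P_0,v}^N$ to recognize $\widehat u(0)$ as $\langle u(P_0), v^{\otimes N}\rangle_h$, the equivariance $\widetilde\psi_{P_0,v}^N=(\overline v/\overline{v'})^N\widetilde\psi_{P_0,v'}^N$ being exactly what supplies the correct factor $v^{\otimes N}$.

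The content of the argument is the self-adjointness of $S_N$ together with Lemma \ref{lem:lin-form-AN}; the main obstacle is the bookkeeping hidden in the transport map $\mathfrak{S}_{P_0,v}^N$, rather than any analytic estimate. Two identifications must be made with care. First, one must check that $\mathfrak{S}_{P_0,v}^N$ is a genuine isometry, so that the global $L^2(M)$ pairing restricted to $U$ really coincides with the $A_N$-scalar product; this amounts to matching the weight $e^{-N\phi}$ and the volume element of the radial chart with the twist and the measure defining $A_N$. Second, the dependence on $v$ must be tracked through the evaluation at the origin, so that $\widehat u(0)$ reassembles into the fibre pairing $\langle u(P_0), v^{\otimes N}\rangle_h$ and not a scalar multiple of it.

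Once these two identifications are pinned down, the reproducing identity, and with it the proposition, follows at once. I would emphasize that this is an \emph{exact} equality, with no exponentially small error: the approximations enter only later, when $\widetilde\psi_{P_0,v}^N$ is compared with its projection $S_N\widetilde\psi_{P_0,v}^N$ as in Proposition \ref{prop:appr-close-HN}.
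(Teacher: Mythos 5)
Your argument is essentially the paper's own proof, only written out in more detail: the paper likewise reduces the identity to the fact that $\langle\,\cdot\,,\widetilde{\psi}_{P_0,v}^N\rangle$ acts on local holomorphic sections over $U=\sigma^{-1}(B(0,r))$ as evaluation at $P_0$ paired with $v^{\otimes N}$ (via Lemma \ref{lem:lin-form-AN} and the ancillary-space identification), and then uses that restriction of global sections to $U$ preserves this pairing together with Definition \ref{def:Coh-state}. Your explicit use of the self-adjointness of $S_N$ and your remark that the equality is exact, with no error term, are both correct and consistent with the paper.
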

  \begin{proof}
    Let $U=B(P_0,r)$. By construction, the scalar product of $\widetilde{\psi}_{P_0,v}^N$ with any element of $H_N(U,L^{\otimes N})$ is the value
    at $P_0$ of this element, taken in scalar product with $v$. As $H_N(M,L^{\otimes N})\subset
    H_N(U,L^{\otimes N})$ in a way which preserves the scalar product
    with $\widetilde{\psi}_{P_0,v}^N$, from Definition \ref{def:Coh-state} one has $S_N\widetilde{\psi}_{P_0,v}^N=\psi_{P_0,v}^N.$
  \end{proof}

  From Propositions \ref{prop:appr-close-HN} and  \ref{prop:SNappr=exact} we deduce that approximate
  coherent states are, indeed, close to coherent states. In
  particular,

  \begin{prop}Uniformly on $(x,y)\in M\times M$, there holds
    \[\|\widetilde{S}_N(x,y)-S_N(x,y)\|_{h}=O(e^{-cN}).
      \]
  \end{prop}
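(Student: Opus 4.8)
The plan is to use that the fibres of $L^{\otimes N}\boxtimes \overline{L}^{\otimes N}$ are one-dimensional, so that the kernel norm can be recovered from a single scalar contraction. Fixing $(x,v),(y,v')\in L^*$ with $|v|_h=|v'|_h=1$, the element $v^{\otimes N}\otimes \overline{v'^{\otimes N}}$ is a unit vector of the dual fibre, whence
\[
  \|\widetilde S_N(x,y)-S_N(x,y)\|_h=\left|\langle \widetilde S_N(x,y)-S_N(x,y),\,v^{\otimes N}\otimes \overline{v'^{\otimes N}}\rangle_h\right|.
\]
It therefore suffices to bound this scalar by $O(e^{-cN})$, uniformly in $(x,y)$ and in the unit vectors $v,v'$.

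First I would identify the two contractions with inner products of coherent states. Expanding $S_N(x,y)=\sum_i s_i(x)\otimes \overline{s_i(y)}$ in a Hilbert basis and invoking the reproducing identity $\langle u,\psi_{x,v}^N\rangle=\langle u(x),v^{\otimes N}\rangle_h$ coming from Definition \ref{def:Coh-state}, the contraction of $S_N(x,y)$ equals $\langle \psi_{y,v'}^N,\psi_{x,v}^N\rangle$, while the contraction of $\widetilde S_N(x,y)$ is by definition $\langle \widetilde\psi_{x,v}^N,\widetilde\psi_{y,v'}^N\rangle$ (up to a harmless conjugation coming from the inner-product convention, which does not affect the modulus). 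The key algebraic step is then a one-sided telescoping: using Proposition \ref{prop:SNappr=exact}, the self-adjointness of the projector, and $\psi_{x,v}^N\in H_0(M,L^{\otimes N})$,
\[
  \langle \psi_{y,v'}^N,\psi_{x,v}^N\rangle=\langle S_N\widetilde\psi_{y,v'}^N,\psi_{x,v}^N\rangle=\langle \widetilde\psi_{y,v'}^N,S_N\psi_{x,v}^N\rangle=\langle \widetilde\psi_{y,v'}^N,\psi_{x,v}^N\rangle.
\]
Subtracting $\langle \widetilde\psi_{y,v'}^N,\widetilde\psi_{x,v}^N\rangle$ and applying Cauchy--Schwarz gives
\[
  \left|\langle \psi_{y,v'}^N,\psi_{x,v}^N\rangle-\langle \widetilde\psi_{y,v'}^N,\widetilde\psi_{x,v}^N\rangle\right|\leq \|\widetilde\psi_{y,v'}^N\|_{L^2}\,\|\psi_{x,v}^N-\widetilde\psi_{x,v}^N\|_{L^2}.
\]

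It then remains to control the two factors. The second is $O(e^{-cN})$ uniformly, by combining Propositions \ref{prop:SNappr=exact} and \ref{prop:appr-close-HN} (which together give $\psi_{x,v}^N=S_N\widetilde\psi_{x,v}^N$ exponentially close to $\widetilde\psi_{x,v}^N$). For the first factor, by construction $\|\widetilde\psi_{y,v'}^N\|_{L^2}=a(N)^{-1/2}$, and from the Taylor expansion $\phi(z)=\tfrac{|z|^2}2+O(|z|^3)$ one has $\phi(z)\leq C|z|^2$ near the origin, so
\[
  a(N)=\int_{B(0,r)}e^{-N\phi(z)}\dd z\dd \overline z\geq \int_{B(0,\delta)}e^{-CN|z|^2}\dd z\dd \overline z\geq cN^{-d},
\]
an elementary Gaussian lower bound giving $\|\widetilde\psi_{y,v'}^N\|_{L^2}=O(N^{d/2})$. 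Hence the difference is $O(N^{d/2}e^{-cN})=O(e^{-c'N})$, uniformly in the base points since all constants are independent of $P_0$ by compactness (cf. Proposition \ref{prop:radius}), which is the claim.

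The main point to watch is that neither family of coherent states is uniformly bounded in $L^2$: both $\psi_{x,v}^N$ and $\widetilde\psi_{x,v}^N$ have norm of polynomial size $N^{d/2}$. The estimate only closes because the telescoping above is arranged so that Cauchy--Schwarz pairs the slowly growing factor $\|\widetilde\psi_{y,v'}^N\|_{L^2}$ against the exponentially small factor $\|\psi_{x,v}^N-\widetilde\psi_{x,v}^N\|_{L^2}$, so that the polynomial growth is absorbed into the exponential. Keeping the fibrewise contraction conventions for $S_N$ and $\widetilde S_N$ consistent is then routine bookkeeping.
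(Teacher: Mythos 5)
Your proof is correct and follows essentially the same route as the paper: both kernels are overlaps of (exact, resp.\ approximate) coherent states, and Propositions \ref{prop:appr-close-HN} and \ref{prop:SNappr=exact} make the two families exponentially close in $L^2$. Your write-up is in fact more careful than the paper's one-line argument, since the telescoping plus Cauchy--Schwarz explicitly absorbs the polynomial growth $\|\widetilde{\psi}_{y,v'}^N\|_{L^2}=a(N)^{-1/2}=O(N^{d/2})$ into the exponential decay, a point the paper leaves implicit.
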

  \begin{proof}
    The exact Bergman kernel is expressed in terms of the coherent
    states as:
    \[
      S_N((x,v),(y,v'))=\langle \psi_{x,v}^N,\psi_{y,v'}^N\rangle.
    \]
    From this and the Definition of $\widetilde{S}_N$,
    since \[S_N\widetilde{\psi}_{x,v}^N=\psi_{x,v}^N=\widetilde{\psi}^N_{x,v}+O(e^{-cN}),\]
    the kernels of $S_N$ and $\widetilde{S}_N$ are exponentially close.
  \end{proof}

  \section{Approximate projector in a normal chart}
\label{sec:appr-proj-norm}

To conclude the proof of Theorem \ref{thr:Szeg-homo} in the
homogeneous case, it only remains to compute an approximate expression
for $\widetilde{S}_N(x,y)=\langle
\widetilde{\psi}_x^N,\widetilde{\psi}_y^N\rangle$. At first sight,
this looks easy. Indeed, on the diagonal,
$\widetilde{S}_N(x,x)=a(N)^{-1}$. Moreover $\widetilde{S}_N$ is
$O(e^{-cN})$-close from the Bergman kernel $S_N$, which is holomorphic in the first variable
and anti-holomorphic in the second variable. However, one cannot
conclude that $\widetilde{S}_N$ is exponentially close to the
holomorphic extension of $a(N)^{-1}$ (that is,
$a(N)^{-1}\Psi^N$). Indeed, $S_N(x,x)-a(N)^{-1}$, while
exponentially small, might oscillate very fast, so that its
holomorphic extension is not uniformly controlled.

By studying change of charts between radial holomorphic charts, one can prove the following Proposition.
\begin{prop}\label{prop:expr-brg-ker}
There exists $c>0$ and $C>0$ such that, for all $(x,y)\in M\times M$, there holds
  \[
    \left\|\widetilde{S}_N(x,y)-\Psi^N(x,y)a(N)^{-1}\right\|_{h}\leq Ce^{-cN}.
  \]
\end{prop}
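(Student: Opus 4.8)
The plan is to compute $\widetilde S_N(x,y)$ directly off the diagonal, rather than to holomorphically extend its diagonal value $a(N)^{-1}$, since the latter route is exactly the one ruled out by the oscillation phenomenon noted above. I would first dispose of the far region. If $\dist(x,y)\geq r/2$, the second bullet of Proposition \ref{prop:reproducing} gives $\|\widetilde S_N(x,y)\|_h\leq Ce^{-cN\dist(x,y)^2}\leq Ce^{-cNr^2/4}$. On the other hand, where $\Psi^N$ is defined one has $\|\Psi^N(x,y)\|_h=e^{-ND(x,y)}$ for a continuous $D\geq0$ vanishing exactly on the diagonal; by the strong convexity of $\phi$ (Proposition \ref{prop:Decay-Kah-pot}), $D$ is bounded below on the locus $\dist(x,y)\geq r/2$, and since Laplace's method applied to $a(N)=\int_{B(0,r)}e^{-N\phi}$ shows that $a(N)^{-1}$ grows at most polynomially, the product $a(N)^{-1}\Psi^N(x,y)$ is again $O(e^{-cN})$ there. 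Both terms being exponentially small, the asserted bound is trivial in this region.

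It remains to treat pairs with $\dist(x,y)<r/2$, and here the whole computation can be carried out inside the single radial chart $\sigma_x$ centered at $x$. Applying the first bullet of Proposition \ref{prop:reproducing} with $P_1=x$ and $P_0=y$ reduces the scalar product to the value of one approximate coherent state,
\[
\widetilde S_N((x,v),(y,v'))=\langle\widetilde\psi_{x,v}^N,\widetilde\psi_{y,v'}^N\rangle=\langle\widetilde\psi_{x,v}(y),(v')^{\otimes N}\rangle_h+O(e^{-cN}).
\]
In the chart $\sigma_x$ the bundle $L^{\otimes N}$ is trivialized by the radial frame $s_x^{\otimes N}$, whose pointwise norm is governed by $e^{-N\phi}$, and, through the isometry $\mathfrak S_{x,v}^N$, the state $\widetilde\psi_{x,v}^N$ is by construction the section associated with the constant function $a(N)^{-1}\in A_N$. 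Consequently its value at $y$ is simply $a(N)^{-1}s_x^{\otimes N}(y)$, anchored in $\overline L_x^{\otimes N}$ by $s_x^{\otimes N}(x)$.

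The crux is to recognize this explicit expression as $a(N)^{-1}$ times the intrinsic section $\Psi^N$. I would evaluate $\Psi^N(x,y)$ in the same chart, taking the radial frame $s_x$ as the non-vanishing section of Definition \ref{def:natural-section}, so that its K\"ahler potential is the radial $\phi$. Writing $\phi(z)=F(z\cdot\overline z)$, the holomorphic extension is $\widetilde\phi(z,\overline w)=F(z\cdot\overline w)$, and at the center $z=0$ one gets $\widetilde\phi(0,\overline w)=F(0)=\phi(0)=0$. Hence, in chart coordinates with $x\mapsto0$ and $y\mapsto w$,
\[
\Psi^1(x,y)=e^{2\widetilde\phi(0,\overline w)}s_x(x)\otimes\overline{s_x(y)}=s_x(x)\otimes\overline{s_x(y)},
\]
so that $\Psi^N(x,y)=s_x^{\otimes N}(x)\otimes\overline{s_x^{\otimes N}(y)}$. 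Comparing with the coherent-state value above — the conjugation supplied by the anti-linear scalar product swaps the two fibre slots into the correct bundle $L_x^{\otimes N}\otimes\overline L_y^{\otimes N}$ — one reads off $\widetilde S_N(x,y)=a(N)^{-1}\Psi^N(x,y)+O(e^{-cN})$.

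The main obstacle is precisely this last identification: one must make rigorous that the \emph{naive} constant section in the radial chart at $x$ coincides with the intrinsically defined holomorphic extension $\Psi^N$, and it is here that the radial structure of the charts does the essential work — equivalently, this is the place where one compares the radial charts based at $x$ and at $y$ near the diagonal. The decisive simplification is that for a radial potential the holomorphic extension degenerates to $F(z\cdot\overline w)$ and vanishes whenever the first argument sits at the center; for a general potential there would be a genuine off-diagonal factor $e^{2\widetilde\phi}$ to track and the clean identity would fail. Finally I would check that the radius $r$, the constants $c,C$, and the polynomial growth of $a(N)^{-1}$ are all uniform in $(x,y)\in M\times M$, which follows from compactness together with the base-point independence of the radial chart and of $\phi$ established in Proposition \ref{prop:radius}.
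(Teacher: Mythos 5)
Your argument is correct, and it reaches the conclusion by a genuinely different (and arguably more economical) route than the paper's. Both proofs hinge on the first bullet of Proposition \ref{prop:reproducing}, but you apply it so that the state based at $x$ is evaluated at $y$, which keeps the whole computation inside the single radial chart at $x$, where $\widetilde\psi_{x,v}^N$ is by construction $a(N)^{-1}$ times the radial frame $s_x^{\otimes N}$. The identification with $\Psi^N$ then rests on your observation that the polarization of a radial potential is $\widetilde\phi(z,\overline w)=F(z\cdot\overline w)$, which vanishes identically when the first argument is the centre of the chart, so that $\Psi^N(x,\cdot)$ computed in the frame $s_x$ carries no off-diagonal exponential factor at all. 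The paper instead evaluates the state based at $y$ at the point $x$, which forces it to transport $\widetilde\psi_{y,v'}^N$ into the chart at $x$ via the isometric change of radial charts \eqref{eq:change-charts-L}; this produces a sesquiholomorphic phase $F(z,\overline w)$ that is then pinned down by polarization from its diagonal values $\widetilde S_N(z,z)=a(N)^{-1}$. Your route trades the change-of-charts bookkeeping (the function $f_\sigma$) for the explicit polarization of the radial potential; the paper's route has the advantage of displaying the sesquiholomorphic structure of the kernel in both variables at once. Two small points you should still make explicit: (i) the fibre bookkeeping in the final identification (which slot of $L_x^{\otimes N}\otimes\overline L_y^{\otimes N}$ receives which factor, and where the conjugations sit), since the kernel is linear in one fibre variable and anti-linear in the other; and (ii) in the region $\dist(x,y)\geq r/2$ the section $\Psi^N$ is only defined near the diagonal, so the statement there should be read, as in the paper, as the exponential smallness of both terms wherever both are defined --- your treatment is already consistent with this and is in fact more careful than the paper's one-line reduction to nearby points.
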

\begin{proof}
It is sufficient to prove the claim for $x,y$ close enough from each other.
  
We first need to understand how to change from the radial holomorphic
chart around $x$ to the radial holomorphic chart around $y$.
By hypothesis, if $x$ and $y$ are two points in $M$ at distance
less than $\frac r2$, if $\rho$ denotes a radial chart at $x$,  there
is a map $\sigma:B(0,\frac r2)\to B(0,r)$, which is biholomorphic on
its image and which preserves the metric $\rho^*g$, and such that
$\sigma(0)=\rho(y)$. The associated holomorphic map on $B(0,\frac
r2)\times \C$
which preserves the Hermitian metric pulled back by $\rho$ on the fibre is of the form:
\begin{equation}
  \label{eq:change-charts-L}
  (z,v)\mapsto
  \left(\sigma(z),\exp\left(\frac
  12(\phi(|z|^2)-\phi(|\sigma(z)|^2))+if_{\sigma}(z)\right)v\right),
  \end{equation}
where $f_{\sigma}$ is such that the function
\[
  m\mapsto \phi(|z|^2)-\phi(|\sigma(z)|^2)+if_{\sigma}(z)
\]
is holomorphic.
Such a $f_{\sigma}$ exists and is unique up to an additive constant:
indeed, since $\sigma$ preserves the metric $g$, $z\mapsto
\phi(|\sigma(z)|^2)$ is a K\"ahler potential on $B(0,\frac
r2)$. Hence, the
map
\[
  z\mapsto \phi(|z|^2)-\phi(|\sigma(z)|^2)
\]
is harmonic, so that it is the real part of a holomorphic function.

Then, by \eqref{eq:change-charts-L}, in a radial holomorphic chart around $x$, the almost coherent state
$\widetilde{\psi}_{y,v'}^N$ is written as
\[
  z\mapsto a(N)^{-1}\mathbb{1}_{V(y)}\overline{v'}\exp\left(-\frac N2\phi(|\sigma(z)|^2)+if_{\sigma}(z)\right).
\]

By Proposition \ref{prop:reproducing}, the scalar product with
$\widetilde{\psi}_{x,v}^N$, with $y$ close to $x$, is
\[
  \langle
  \widetilde{\psi}_{y,v'}^N,\widetilde{\psi}_{x,v}^N\rangle=a(N)^{-1}(v\overline{v'})\exp\left(-\frac
  N2 \phi(|\rho(y)|^2)+iNf_{\sigma}(0)\right)+O(e^{-cN}).
\]

In particular, in a radial holomorphic chart $\rho$ around $x$, the
approximate Bergman kernel evaluated at $x$ has the following form for $z$ small:
\[
  \widetilde{S}_N(\rho(z),\rho(0))=a(N)^{-1}\exp(Ng(z))\psi_{x}^N(\rho(z))\overline{\psi_{x}^N(\rho(0))}+O(e^{-cN}),
\]
where $g$ is holomorphic.
Using another change of charts given by
\eqref{eq:change-charts-L}, the form of the approximate Bergman
kernel, near the diagonal,
is
\[
  \widetilde{S}_N(\rho(z),\rho(w))=a(N)^{-1}\exp(NF(z,w))\widetilde{\psi}_{x}^N(\rho(z))\overline{\widetilde{\psi}_{x}^N(\rho(w))}+O(e^{-cN}),
\]
where $F$ is holomorphic in the first variable and anti-holomorphic in
the second variable.

Moreover, $\widetilde{S}_N(z,z)=\widetilde{S}_N(0,0)=a(N)^{-1}$, hence $F(z,\overline{w})=\widetilde{\phi}(z\cdot
\overline{w})$.

The
expression of the phase in coordinates coincides with the section $\Psi^N$ of
Definition \ref{def:natural-section} (the non-vanishing section $s$
here is $\widetilde{\psi}_{x}^1$). Thus the Bergman kernel can be written as
\[
  \widetilde{S}_N(x,y)=\Psi^N(x,y)a(N)^{-1}+O(e^{-cN}).
\]
\end{proof}

We will compute explicitely $a(N)^{-1}$ in Section
\ref{sec:coeff-bergm-kern}. Up to this computation, the proof of
Theorem $A$ is complete in the case of a single homogeneous manifold.

It remains to prove how to pass from homogeneous manifolds to direct
products of such. This relies on the following Proposition.
\begin{prop}
  Let $M_1,M_2$ be compact quantizable K\"ahler manifolds and
  $L_1,L_2$ be the associated prequantum line bundles. Then
  $L_1\boxtimes L_2$ is the prequantum line bundle over $M_1\times
  M_2$, and
  \[
    H_0(M_1\times M_2,(L_1\boxtimes L_2)^{\otimes N})\simeq
    H_0(M_1,L_1^{\otimes N})\otimes H_0(M_2,L_2^{\otimes N}).
  \]
\end{prop}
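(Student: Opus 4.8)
The plan is to split the statement into the claim about the prequantum bundle and the Künneth-type isomorphism of spaces of holomorphic sections. Throughout, write $\pi_1,\pi_2$ for the two projections of $M_1\times M_2$ onto its factors, so that the product Kähler structure is $J=J_1\oplus J_2$ and $\omega=\pi_1^*\omega_1+\pi_2^*\omega_2$. With the notation of Section \ref{sec:kern-line-oper}, the bundle $L_1\boxtimes L_2$ is $\pi_1^*L_1\otimes \pi_2^*L_2$, equipped with the tensor product of the pulled-back Hermitian metrics.

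For the first claim, I would simply invoke additivity of curvature. The Chern curvature of a pullback is the pullback of the curvature, and the curvature of a tensor product of line bundles is the sum of the curvatures; hence the curvature of $L_1\boxtimes L_2$ is $\pi_1^*(-2i\pi\omega_1)+\pi_2^*(-2i\pi\omega_2)=-2i\pi\omega$. This is exactly the prequantization condition of Definition \ref{def:toep}, so $L_1\boxtimes L_2$ is a prequantum bundle over $(M_1\times M_2,J,\omega)$. Since pullback and tensor product commute, one also has $(L_1\boxtimes L_2)^{\otimes N}=L_1^{\otimes N}\boxtimes L_2^{\otimes N}$, which I will use freely.

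For the isomorphism, the natural candidate map sends $s_1\otimes s_2$ to the section $s_1\boxtimes s_2:(x_1,x_2)\mapsto s_1(x_1)\otimes s_2(x_2)$ of $L_1^{\otimes N}\boxtimes L_2^{\otimes N}$. I would first check this lands in $H_0$: because the complex structure is the product $J_1\oplus J_2$, the Dolbeault operator on $M_1\times M_2$ splits, and $\overline{\partial}(s_1\boxtimes s_2)=(\overline{\partial}s_1)\boxtimes s_2+s_1\boxtimes(\overline{\partial}s_2)=0$ since $s_1,s_2$ are holomorphic. Injectivity is then routine: if $(s_i)_i$ and $(t_j)_j$ are bases of $H_0(M_1,L_1^{\otimes N})$ and $H_0(M_2,L_2^{\otimes N})$, linear independence of the $s_i\boxtimes t_j$ follows by pairing in the first variable against an evaluation functional that isolates a given $s_i$, then in the second.

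The main work---and the main obstacle---is surjectivity, that is, the decomposition of an arbitrary $S\in H_0(M_1\times M_2,(L_1\boxtimes L_2)^{\otimes N})$ as a finite sum $\sum_i s_i\boxtimes c_i$ with holomorphic coefficient sections. Here I would fix a basis $(s_i)_{1\le i\le d}$ of the finite-dimensional space $H_0(M_1,L_1^{\otimes N})$ and observe that, for each fixed $x_2\in M_2$, the slice $x_1\mapsto S(x_1,x_2)$ is a holomorphic section of $L_1^{\otimes N}$ with values in the fixed line $(L_2^{\otimes N})_{x_2}$ (it is the pullback of $S$ under the holomorphic inclusion $M_1\hookrightarrow M_1\times M_2$, $x_1\mapsto(x_1,x_2)$); hence it lies in $H_0(M_1,L_1^{\otimes N})\otimes(L_2^{\otimes N})_{x_2}$ and can be written uniquely as $\sum_i s_i(x_1)\otimes c_i(x_2)$ with $c_i(x_2)\in(L_2^{\otimes N})_{x_2}$. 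The only delicate point is to show that each $c_i$ is a \emph{holomorphic} section of $L_2^{\otimes N}$ over $M_2$. For this I would note that evaluation functionals $u\mapsto\langle u(p),v\rangle_h$ separate points of the finite-dimensional space $H_0(M_1,L_1^{\otimes N})$, hence span its dual; choosing a dual basis of $(s_i)_i$ consisting of finite linear combinations of such functionals, each $c_i(x_2)$ is the corresponding fixed linear combination of the quantities $\langle S(p_k,x_2),v_k\rangle_h$, which depend holomorphically on $x_2$ because $S$ does. This exhibits $c_i\in H_0(M_2,L_2^{\otimes N})$ and gives $S=\sum_i s_i\boxtimes c_i$, completing surjectivity and hence the isomorphism. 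Alternatively, one could bypass the coefficient extraction by comparing dimensions through the Künneth formula for coherent sheaf cohomology, but the elementary slicing argument is more in keeping with the spirit of the paper.
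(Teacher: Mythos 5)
Your proof is correct, but the surjectivity step takes a genuinely different route from the paper's. The paper also starts from the tautological injection $\iota(s_1\otimes s_2)(x,y)=s_1(x)\otimes s_2(y)$, but it proves surjectivity by showing that the coherent states of the product factor, namely $\psi^N_{(x_1,x_2),v_1\otimes v_2}=\iota\bigl(\psi^N_{x_1,v_1}\otimes\psi^N_{x_2,v_2}\bigr)$ (verified by a Fubini computation of $\langle s,\iota(\psi^N_{x_1,v_1}\otimes\psi^N_{x_2,v_2})\rangle$), and then concludes that the orthogonal complement of the range of $\iota$ is zero because it is orthogonal to every coherent state, hence vanishes at every point. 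You instead slice: for fixed $x_2$ you expand $S(\cdot,x_2)=\sum_i s_i\otimes c_i(x_2)$ in a basis of $H_0(M_1,L_1^{\otimes N})$ and recover each $c_i$ as a fixed finite linear combination of the holomorphic quantities $x_2\mapsto\langle S(p_k,x_2),v_k\rangle_h$, which shows the coefficients are holomorphic sections of $L_2^{\otimes N}$. Both arguments ultimately rest on evaluation functionals spanning the dual of the finite-dimensional space $H_0(M_1,L_1^{\otimes N})$; yours is more elementary and self-contained (it does not need the Riesz representation or the $L^2$ structure, only compactness of $M_1$ to guarantee finite-dimensionality), while the paper's version is shorter given the coherent-state formalism already in place and has the advantage of immediately yielding the factorization $S_N^{M_1\times M_2}(x_1,x_2,y_1,y_2)=S_N^{M_1}(x_1,y_1)\otimes S_N^{M_2}(x_2,y_2)$, which is the fact actually used in the next step of the proof of Theorem~\ref{thr:Szeg-homo}. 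You also verify the prequantization claim (additivity of curvature under $\pi_1^*\otimes\pi_2^*$), which the paper leaves implicit; that is a welcome addition.
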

\begin{proof}
  There is a tautological, isometric injection
  \[
    \iota:H_0(M_1,L_1^{\otimes N})\otimes H_0(M_2,L_2^{\otimes N})\hookrightarrow H_0(M_1\times M_2,(L_1\boxtimes L_2)^{\otimes N})
  \]
  which is such that, for $(s_1,s_2)\in H_0(M_1,L_1^{\otimes N})\times
  H_0(M_2,L_2^{\otimes N})$ and $(x,y)\in M_1\times M_2$, one has
  \[
    \iota(s_1\otimes s_2)(x,y)=s_1(x)\otimes s_2(y).
  \]
  It remains to prove that any element of $H_0(M_1\times
  M_2,(L_1\boxtimes L_2)^{\otimes N})$ belongs to the image of the
  element above. To this end, let us prove that, for any
  $(x_1,v_1),(x_2,v_2)\in L_1\times L_2$, the coherent state at
  $((x_1,x_2),v_1\otimes v_2)$ is given by
  \[
    \psi^N_{(x_1,x_2),v_1\otimes v_2}=\iota(\psi^N_{x_1,v_1}\otimes
    \psi^N_{x_2,v_2}).\]
  Indeed, for any $s\in H_0(M_1\times
  M_2,(L_1\boxtimes L_2)^{\otimes N})$, one has
  \begin{align*}
    \langle s,\iota(\psi^N_{x_1,v_1}\otimes
    \psi^N_{x_2,v_2})\rangle&=\int_{M_1}\left\langle \int_{M_2}\langle
                              s(y_1,y_2),\psi^N_{x_2,v_2}(y_2)\rangle_{(L_2)_{y_2}^{\otimes N}}
                              \dd
                              y_2,\psi^N_{x_1,v_1}(y_1)\right\rangle_{(L_1)_{y_1}^{\otimes N}}
                              \dd x_1\\
    &=\int_{M_1}\langle s(y_1,x_2),\psi_{x_1,v_1}^{\otimes N}\otimes
      v_2\rangle_{(L_1)_{y_1}^{\otimes N}\otimes (L_2)_{x_2}^{\otimes N}}
      \dd x_1\\
    &=\langle s(x_1,x_2),v_1\otimes v_2\rangle_{(L_1)_{x_1}^{\otimes N}\otimes (L_2)_{x_2}^{\otimes N}}=\langle
      s,\psi_{(x_1,x_2),v_1\otimes v_2}^N\rangle.
  \end{align*}
  The image of $\iota$ thus contains all coherent states on
  $M_1\times M_2$. Hence, the orthogonal of the range of $\iota$ in $H_0(M_1\times
  M_2,(L_1\boxtimes L_2)^{\otimes N})$ is zero, which concludes the proof.
\end{proof}
In particular, the Bergman kernel on a product $M_1\times M_2$ is
given by
\[
  S_N^{M_1\times M_2}(x_1,x_2,y_1,y_2)=S_N^{M_1}(x_1,y_1)\otimes
  S_N^{M_2}(x_2,y_2).
\]
This, along with Propositions \ref{prop:SNappr=exact} and
\ref{prop:expr-brg-ker}, concludes the proof of Theorem
\ref{thr:Szeg-homo} up to the study of $a(N)^{-1}$, which we perfrorm
in the next section.

\section{The coefficients of the Bergman kernel}
\label{sec:coeff-bergm-kern}
Since, for all $x\in M$, one has $\Psi^N(x,x)=1$, then the trace of
the Bergman kernel is given by
\[
  \tr(S_N)=\sum_{i=1}^{d_N}1=\int_M\sum_{i=1}^{d_N}s_i(x)\overline{s_i(x)}\dd
                              x
                            =\int_M S_N(x,x) \dd z
                            = a(N)^{-1}Vol(M)+O(e^{-cN}).
\]
In particular, $a(N)^{-1}$ is exponentially close to an integer
divided by $Vol(M)$. Let
\[
  P(N)=\frac{\tr(S_N)}{Vol(M)}.
  \]

In this section we compute $P(N)$ in the case of a homogeneous
manifold of dimension $d$. Since
\[
  P(N)^{-1}=\int_{B(0,r)}\exp(-N\phi(|z|))\dd z \dd \overline{z}+O(e^{-cN}),
\]
and there is a universal local model for $M$ which depends only on its
curvature $\kappa$, then $P(N)$ depends only on $\kappa$ and the dimension $d$. Moreover, $P(N)^{-1}$ has real-analytic
dependence on $\kappa$. We will give an expression for $P(N)$ which is
valid on $\kappa\in\{\frac{1}{k},\,k\in \N\}.$ Since $P(N)$ is
real-analytic in $\kappa$, it will follow that this expression is valid for
all curvatures. From now on we write $P_{\kappa}(N)$ to indicate that
$P(N)$ depends on $N$ and $\kappa$, and only on them.

Let us consider the case of the rescaled projective space: \[(M_k,\omega_k,J)=(\C\mathbb{P}^d,k\omega_{FS},J_{st}).\]
This space is quantizable; the prequantum bundle is
simply
\[
  L_k=(L_1)^{\otimes k},
\]
so that
\[
  S_{N,k}(x,y)=S_{Nk,1}(x,y).
\]
Moreover, the curvature of $(M_k,\omega_k)$ is $\frac{1}{k}$. In other
terms,
\[
  P_{\frac 1k}(N)=\frac{Vol(M_1)}{Vol(M_k)}P_1(kN)=k^{-d}P_1(kN).
\]
It remains to compute $P_1$. On $\C\mathbb{P}^d$, the prequantum
bundle $L_1$ is explicit: it is $O(1)$, the dual of the tautological
line bundle. In this setting,
\[
  H_0(M,L^{\otimes N})\simeq \C_N[X_1,\ldots,X_d].
\]
Hence,
\[
  P_1(N)=\frac{1}{Vol(\C\mathbb{P}^d)}\dim(\C_N[X_1,\ldots,X_d])=\frac{d!}{\pi^d}\binom{N+d}{d}=\frac{1}{\pi^d}(N+1)\ldots(N+d).
  \]
  Hence, for any $\kappa$ of the form $\frac 1k$ with $k\in \N$ there holds
  \[
    P_{\kappa}(N)=\frac{1}{\pi^d}(N+\kappa)(N+2\kappa)\ldots(N+d\kappa).
  \]
  Since $P_{\kappa}$ has real-analytic dependence on $\kappa$, the
  formula above is true for any $\kappa\in \R$, which concludes the
  proof.

  \section{Acknowledgements}
\label{sec:acknowledgements}

The author thanks N. Anantharaman and L. Charles for useful discussion.







\bibliographystyle{abbrv}
\bibliography{main}
\end{document}